\newcommand{\aut}{\textnormal{Aut}}
\newcommand{\disj}{\textnormal{disj}}
\newcommand{\supp}{\textnormal{supp}}
\newcommand{\dinfinity}{D_\infty}
\newcommand{\indec}{\mathrm{Indec}}
\providecommand*{\cupdot}{%
  \mathbin{%
    \mathpalette\@cupdot{}%
  }%
}
\newcommand*{\@cupdot}[2]{%
  \ooalign{%
    $\m@th#1\cup$\cr
    \sbox0{$#1\cup$}%
    \dimen@=\ht0 %
    \sbox0{$\m@th#1\cdot$}%
    \advance\dimen@ by -\ht0 %
    \dimen@=.5\dimen@
    \hidewidth\raise\dimen@\box0\hidewidth
  }%
}
\providecommand*{\bigcupdot}{%
  \mathop{%
    \vphantom{\bigcup}%
    \mathpalette\@bigcupdot{}%
  }%
}
\newcommand*{\@bigcupdot}[2]{%
  \ooalign{%
    $\m@th#1\bigcup$\cr
    \sbox0{$#1\bigcup$}%
    \dimen@=\ht0 %
    \advance\dimen@ by -\dp0 %
    \sbox0{\scalebox{2}{$\m@th#1\cdot$}}%
    \advance\dimen@ by -\ht0 %
    \dimen@=.5\dimen@
    \hidewidth\raise\dimen@\box0\hidewidth
  }%
}
\newtheorem{lemma}{Lemma}[section]
\newtheorem{theorem}[lemma]{Theorem}
\newtheorem{example}[lemma]{Example}
\newtheorem{dfn}[lemma]{Definition}
\newtheorem{prop}[lemma]{Proposition}
\newtheorem{question}[lemma]{Question}
\title{The Reconstruction of Cycle-free Partial Orders from their Abstract Automorphism Groups III : Decorated CFPOs}
\author{Robert Barham \\ Institut f\"ur Algebra, TU Dresden \\ robert.barham@yahoo.co.uk}
\begin{document}

\maketitle

\thanks{The author has received funding from the European Research Council under the European Community's Seventh Framework Programme (FP7/2007-2013 Grant Agreement no. 257039).}

\abstract{In this triple of papers, we examine when two cycle-free partial orders can share an abstract automorphism group.  This question was posed by M. Rubin in his memoir concerning the reconstruction of trees.

In this final paper, we give describe a way of constructing `decorated' CFPOs by attaching treelike CFPOs to and between the elements of a cone transitive CFPO.  We then show that the automorphism groups of the components of of a decorated CFPO are second order definable in the abstract automorphism group of the decorated CFPO.}

\section{Introduction}

The two types of CFPOs we considered in the previous two parts are very different in character, so it seems reasonable that perhaps out two methods can be combined in some way, without too much interference.  This is indeed possible, and in this paper we will combine treelike and members of $K_{Cone}$ in such a way that the automorphism groups of the components are definable in the whole automorphism group, and so our previous reconstruction results will be applicable.

Section \ref{section:dec} will give the method of decoration and describe the resulting automorphism groups as wreath products of the automorphism groups of the components, while Section \ref{sec:interpretation} will define these components using second order logic.  This is a desirable outcome, because if the components are definable, then we can perform our interpretations inside the definable sets rather than the whole group, reconstructing the component structures.

In Section \ref{sec:final} there are some concluding remarks, and some open problems raised by these papers.

\section{Decoration}\label{section:dec}

We will first look at attaching trees above and between points of a member of $K_{Cone}$, and give conditions for when a general CFPO shares an automorphism group with such a CFPO.

\begin{dfn}
If $M$ is a CFPO then we define $M_{ap}$ to be the set
$$\lbrace (i,j) \in M^2 \: : \: i <_M j \wedge \forall k \in M \neg (i <_M k <_M j)  \rbrace$$
ap stands for `adjacent pairs'.
\end{dfn}

\begin{dfn}
Let$\langle M, \leq_M \rangle$ be a CFPO and let $\langle S, \leq_S \rangle$ and  $\langle T, \leq_T, L \rangle$ be trees, where $L$ is a unary predicate that picks out a maximal chain of $T$.  The structure $\mathrm{Dec}(M,S,(T,L))$ is the partial order with universe
$$|M| \cupdot \bigcupdot\limits_{i \in M} |S_i| \cupdot \bigcupdot\limits_{(i,j) \in M_{ap}} |T_{(i,j)}|$$
where:
\begin{itemize}
\item $S_i \cong S$ for every $i \in M$
\item $T_{(i,j)} \cong T$ for every $(i,j) \in \mathcal{M}$.  We use $L_{(i,j)}$ to denote the maximal chain of $T_{(i,j)}$ picked out by $L$.
\end{itemize}
$\mathrm{Dec}(M,S,(T,L))$ is ordered by $\leq_D$, which is the transitive closure of the following:\vspace*{-10pt}

$$\begin{array}{ c  r}
x \leq_M y & \quad \mathrm{or} \\
x \leq_{S_i} y  & \mathrm{or} \\
y \in S_x & \mathrm{or} \\
x \leq_{T_{(i,j)}} y  & \mathrm{or} \\
\exists z \in M \: L_{(x,z)}(y) & \mathrm{or} \\
\exists z \in M \: L_{(z,y)}(x)
\end{array}
$$

Informally, we attach a copy of $S$ above every point of $M$, and glue a copy of $T$ between every adjacent pair of $M$ along $L$.
\end{dfn}

Note that if $M_{ap}$ is empty, in other words if $M$ is dense, then $$\mathrm{Dec}(M,S,(T_0,L_0)) = \mathrm{Dec}(M,S,(T_1,L_1))$$ for all $(T_0,L_0)$ and $(T_1,L_1)$.

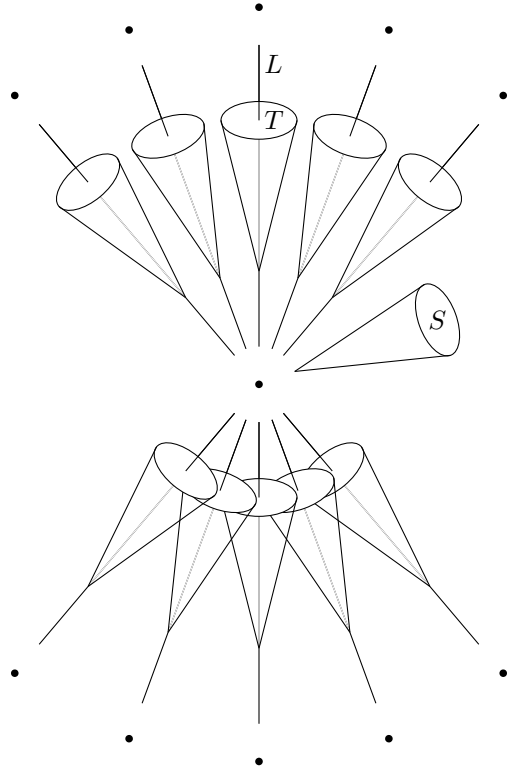
\begin{figure}[!b]\label{Fig:VagueDecoration}
\begin{center}
\begin{tikzpicture}[scale=0.05]
\fill (0,0) circle (1);
\draw (0,10) -- (0,90);
\fill[white] (0,70) ellipse [x radius=10,y radius=5];
\draw[white] (0,30) -- (0,70);
\draw (0,30) -- (-10,70);
\draw (0,30) -- (10,70);
\draw (0,70) ellipse [x radius=10,y radius=5];
\draw (0,70) -- (0,90);
\fill (0,100) circle (1);
\draw[anchor=west] (-1,85) node {$L$};
\draw[anchor=west] (-1,70) node {$T$};

\draw[rotate=20] (0,10) -- (0,90);
\fill[white,rotate=20](0,70) ellipse [x radius=10,y radius=5];
\draw[white,rotate=20] (0,30) -- (0,70);
\draw[rotate=20] (0,30) -- (-10,70);
\draw[rotate=20] (0,30) -- (10,70);
\draw[rotate=20] (0,70) ellipse [x radius=10,y radius=5];
\draw[rotate=20] (0,70) -- (0,90);
\fill[rotate=20] (0,100) circle (1);

\draw[rotate=40] (0,10) -- (0,90);
\fill[white,rotate=40](0,70) ellipse [x radius=10,y radius=5];
\draw[white,rotate=40] (0,30) -- (0,70);
\draw[rotate=40] (0,30) -- (-10,70);
\draw[rotate=40] (0,30) -- (10,70);
\draw[rotate=40] (0,70) ellipse [x radius=10,y radius=5];
\draw[rotate=40] (0,70) -- (0,90);
\fill[rotate=40](0,100) circle (1);

\draw[rotate=-20] (0,10) -- (0,90);
\fill[white,rotate=-20](0,70) ellipse [x radius=10,y radius=5];
\draw[white,rotate=-20] (0,30) -- (0,70);
\draw[rotate=-20] (0,30) -- (-10,70);
\draw[rotate=-20] (0,30) -- (10,70);
\draw[rotate=-20] (0,70) ellipse [x radius=10,y radius=5];
\draw[rotate=-20] (0,70) -- (0,90);
\fill[rotate=-20] (0,100) circle (1);

\draw[rotate=-40] (0,10) -- (0,90);
\fill[white,rotate=-40](0,70) ellipse [x radius=10,y radius=5];
\draw[white,rotate=-40] (0,30) -- (0,70);
\draw[rotate=-40] (0,30) -- (-10,70);
\draw[rotate=-40] (0,30) -- (10,70);
\draw[rotate=-40] (0,70) ellipse [x radius=10,y radius=5];
\draw[rotate=-40] (0,70) -- (0,90);
\fill[rotate=-40] (0,100) circle (1);

\draw[rotate=40] (0,-90) -- (0,-10);
\fill[white,rotate=40](0,-30) ellipse [x radius=10,y radius=5];
\draw[white,rotate=40] (0,-70) -- (0,-30);
\draw[rotate=40] (0,-70) -- (-10,-30);
\draw[rotate=40] (0,-70) -- (10,-30);
\draw[rotate=40] (0,-30) ellipse [x radius=10,y radius=5];
\draw[rotate=40] (0,-30) -- (0,-10);
\fill[rotate=40] (0,-100) circle (1);

\draw[rotate=20] (0,-90) -- (0,-10);
\fill[white,rotate=20](0,-30) ellipse [x radius=10,y radius=5];
\draw[white,rotate=20] (0,-70) -- (0,-30);
\draw[rotate=20] (0,-70) -- (-10,-30);
\draw[rotate=20] (0,-70) -- (10,-30);
\draw[rotate=20] (0,-30) ellipse [x radius=10,y radius=5];
\draw[rotate=20] (0,-30) -- (0,-10);
\fill[rotate=20] (0,-100) circle (1);

\draw[rotate=0] (0,-90) -- (0,-10);
\fill[white,rotate=0](0,-30) ellipse [x radius=10,y radius=5];
\draw[white,rotate=0] (0,-70) -- (0,-30);
\draw[rotate=0] (0,-70) -- (-10,-30);
\draw[rotate=0] (0,-70) -- (10,-30);
\draw[rotate=0] (0,-30) ellipse [x radius=10,y radius=5];
\draw[rotate=0] (0,-30) -- (0,-10);
\fill[rotate=0] (0,-100) circle (1);

\draw[rotate=-20] (0,-90) -- (0,-10);
\fill[white,rotate=-20](0,-30) ellipse [x radius=10,y radius=5];
\draw[white,rotate=-20] (0,-70) -- (0,-30);
\draw[rotate=-20] (0,-70) -- (-10,-30);
\draw[rotate=-20] (0,-70) -- (10,-30);
\draw[rotate=-20] (0,-30) ellipse [x radius=10,y radius=5];
\draw[rotate=-20] (0,-30) -- (0,-10);
\fill[rotate=-20] (0,-100) circle (1);

\draw[rotate=-40] (0,-90) -- (0,-10);
\fill[white,rotate=-40](0,-30) ellipse [x radius=10,y radius=5];
\draw[white,rotate=-40] (0,-70) -- (0,-30);
\draw[rotate=-40] (0,-70) -- (-10,-30);
\draw[rotate=-40] (0,-70) -- (10,-30);
\draw[rotate=-40] (0,-30) ellipse [x radius=10,y radius=5];
\draw[rotate=-40] (0,-30) -- (0,-10);
\fill[rotate=-40] (0,-100) circle (1);

\fill[white,rotate=-70] (0,50) ellipse [x radius=10,y radius=5];
\draw[rotate=-70] (0,10) -- (-10,50);
\draw[rotate=-70] (0,10) -- (10,50);
\draw[rotate=-70] (0,50) ellipse [x radius=10,y radius=5];
\draw[rotate=-70] (0,50) node {$S$};
\end{tikzpicture}
\end{center}
\caption{A vague illustration of Decoration}
\end{figure}

\begin{example}\label{ExampleDec}
An illustration of the neighbourhood of an element of $M$ in $\mathrm{Dec}(M,S,(T,L))$ is given in Figure 32.  A more specific example of decorating is pictured in Figure 33.  In this example, we do not need to specify an $L$, as $B$ has exactly one maximal chain.
\end{example}

\begin{figure}\label{Fig:SpecificDecoration}
\begin{center}
\begin{tikzpicture}[scale=0.09]
\draw (-80,10) -- (-70,20) -- (-60,10);
\draw (-80,30) -- (-70,20) -- (-60,30);

\fill (-80,10) circle (0.5);
\fill (-70,20) circle (0.5);
\fill (-60,10) circle (0.5);
\fill (-80,30) circle (0.5);
\fill (-60,30) circle (0.5);

\draw (-78,20) node[anchor=east] {$A=$};

\fill (-30,20) circle (0.5);
\draw (-30,20) node[anchor=east] {$B=$};

\draw (12,20) node[anchor=east] {$\mathrm{Dec}(A,B,B)=$};

\draw (10,10) -- (10,0) -- (20,15) -- (30,0) -- (30,10);
\draw (10,40) -- (10,30) -- (20,15) -- (30,30) --(30,40);

\fill (10,10) circle (0.5);
\fill (20,15) circle (0.5);
\draw (20,15) -- (20,25);
\fill (20,25) circle (0.5);

\fill (30,10) circle (0.5);
\fill (10,30) circle (0.5);
\fill (30,30) circle (0.5);
\fill (10,40) circle (0.5);
\fill (30,40) circle (0.5);
\fill (10,0) circle (0.5);
\fill (30,0) circle (0.5);

\fill (15,7.5) circle (0.5);
\fill (25,7.5) circle (0.5);
\fill (15,22.5) circle (0.5);
\fill (25,22.5) circle (0.5);

\end{tikzpicture}
\end{center}
\caption{Example \ref{ExampleDec}}
\end{figure}

\begin{prop}
$\mathrm{Dec}(M,S,(T,L))$ is a CFPO for any $M$, $S$ and $(T,L)$.
\end{prop}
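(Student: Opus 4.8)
The plan is to verify first that $\leq_D$ is a partial order and then that it satisfies the path criterion for cycle-freeness. Reflexivity and transitivity of $\leq_D$ are immediate, since by definition $\leq_D$ is the transitive closure of a reflexive relation. The only nontrivial order axiom is antisymmetry, and the efficient route to it is to produce an explicit combinatorial description of $\leq_D$. I would show that $x \leq_D y$ holds exactly when $x$ and $y$ lie in a common piece ($M$, some $S_i$, or some $T_{(i,j)}$) and are related by that piece's order, or when one can pass monotonically upward from the piece of $x$ to the piece of $y$ through the prescribed interfaces: a point $x \in M$ lies below every element of $S_x$ and below every element of $L_{(x,z)}$, an element of $L_{(z,y)}$ lies below $y \in M$, and these compose. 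Reading off this description, every generating pair $x \mathrel{R} y$ with $x \neq y$ is oriented ``upward'' in a way compatible with the $\leq_M$-position of the base point(s) and, within a fixed piece, with that piece's strict order; hence no nontrivial $\leq_D$-cycle can close up, which gives antisymmetry. Producing this description — in particular deciding the comparabilities of the points of a $T_{(i,j)}$ that branch off the chain $L_{(i,j)}$ relative to $i$ and $j$ — is the one place that needs genuine care.

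For cycle-freeness I would use the unique-path characterisation of CFPOs. Connectivity is clear: $M$ is connected, each $S_i$ is connected to $M$ through $i$, and each $T_{(i,j)}$ is connected to $M$ through $i$ and $j$, so every point is joined to $M$ and thence to every other point. The structural observation driving uniqueness is that the pieces are glued together in a tree-like pattern with very small interfaces. Each $S_i$ meets the rest of the order only through the single cut point $i$, being a pendant attached above $i$; while each $T_{(i,j)}$ is inserted convexly ``across'' the adjacent pair $(i,j)$, with $i$ lying below all of $L_{(i,j)}$ and $j$ above all of $L_{(i,j)}$, so that $T_{(i,j)}$ meets the rest of the order only through the two points $i$ and $j$. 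Because $(i,j) \in M_{ap}$, these two points are adjacent in $M$, so inserting $T_{(i,j)}$ merely subdivides and decorates the edge between them rather than creating a genuinely new second route between otherwise distant points.

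To prove uniqueness I would take any reduced path $x = c_0, c_1, \ldots, c_n = y$ and cut it at the interface points $i$, $j$ and the chains $L_{(i,j)}$ wherever it crosses between pieces, expressing it as a concatenation of segments each lying inside a single piece $M$, $S_i$ or $T_{(i,j)}$. Within a piece each segment is a reduced path of a CFPO (a tree, or $M$ itself) and so is determined by its endpoints; and the sequence of pieces visited, together with the interfaces used, is forced by the tree-like gluing together with cycle-freeness of $M$. Comparing two reduced paths with the same endpoints, the decompositions must agree piece by piece, so the paths coincide. The main obstacle, and where I expect to spend the most effort, is the chain-gluing of $T$: I must rule out an alternate route between $i$ and $j$ through different elements of $L_{(i,j)}$ or through branches of $T_{(i,j)}$. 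This I would settle by noting that every $\ell \in L_{(i,j)}$ is redundant on any $i$-to-$j$ path, since $i <_D \ell <_D j$, and that the off-chain branches of $T$ are pendants contributing no closed reduced path. Establishing that the resulting graph of pieces is genuinely acyclic — that no reduced path can leave a piece and return to it by a different interface — is the crux, and it follows from the convexity of each interface together with the cycle-freeness of $M$, $S$ and $T$.
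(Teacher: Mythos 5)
Your proposal is correct and follows essentially the same route as the paper: both establish unique paths by splitting any path into its trace on $M$ and its excursions into the copies of $S$ and $(T,L)$, noting that each $S_i$ attaches only at $i$ and each $T_{(i,j)}$ only at $i$ and $j$, and then invoking cycle-freeness of $M$ together with uniqueness of paths in trees. Your only addition is the explicit check that $\leq_D$ is a partial order (antisymmetry), which the paper's proof takes for granted.
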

\begin{proof}
Let $a$ and $b$ be such that there are two different paths between them, which we will call $P_0$ and $P_1$.  If $a$ and $b$ are contained in the same copy of $S$ or $(T,L)$ then this contradicts our assumption that $S$ and $(T,L)$ are trees.  If $a \in S_{m_a}$ then $\lbrace m_a \rbrace \subseteq P_0 \cap M \cap P_1$.  If $a \in T_{(m_a,m'_a)}$ then either $m_a$ or $m'_a$ is in $P_0 \cap M$.  Similarly either $m_a$ or $m'_a$ is in $P_1 \cap M$.

Thus the starting point of $P_0 \cap M$ is one of $a$, $m_a$ or $m'_a$, while the ending point is one of $b$, $m_b$ and $m'_b$.  The same conclusion can be reached for $P_1 \cap M$.  If $P_0 \cap M$ starts with $m_a$ while $P_1 \cap M$ starts with $m'_a$ then either $P_0$ or $P_1$ has to pass through the starting point of the other, which implies that one of the paths doubles back on itself, giving a contradiction.  Since $P_0 \cap M$ and $P_1 \cap M$ have the same start and end points, the fact that $M$ is a CFPO implies that they must be equal.

Then $P_0$ and $P_1$ `move through'  $M$ in the same way, and so must differ by their behaviour within the copies of $S$ and $(T,L)$.  But both $S$ and $(T,L)$ are trees, so have unique paths and therefore $P_0=P_1$.
\end{proof}

\begin{lemma}\label{preservesX}
$\aut(\mathrm{Dec}(M,S,(T,L)))$ preserves $M$ setwise.
\end{lemma}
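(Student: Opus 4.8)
The plan is to exhibit a property of points that is expressible purely in terms of the order $\leq_D$ (hence preserved by every $g \in \aut(\mathrm{Dec}(M,S,(T,L)))$) and that holds of a point exactly when it lies in $M$; setwise preservation of $M$ is then immediate. Write $D = \mathrm{Dec}(M,S,(T,L))$. Since $D$ is a CFPO, between any two path-connected points there is a unique path, so each $g$ carries the connected components of $D \setminus \{x\}$ bijectively onto those of $D \setminus \{g(x)\}$. In particular the cone structure at a point, in the sense developed in the earlier parts, is an automorphism invariant, and it is this invariant I would use to separate the three kinds of point of $D$: the points of $M$, the points interior to some copy $S_i$, and the points interior to some copy $T_{(i,j)}$.

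The separating feature I would isolate is that a decoration-interior point is always \emph{cut off from the bulk of $D$ by a single point}, whereas a point of $M$ is not. Concretely, a copy $S_i$ meets the rest of $D$ only through $i$, so every $p \in S_i$ is severed from $M \setminus \{i\}$ and from every other decoration by deleting the one point $i$; likewise a copy $T_{(i,j)}$ meets the rest of $D$ only through $i$ and $j$ along the chain $L_{(i,j)}$, so every point of $T_{(i,j)}$ is confined to a region bounded by these two cut points. A point $i \in M$, by contrast, inherits the cone-transitive structure of $M$ both above and below it, and I would use the homogeneity of $M$ (its membership in $K_{Cone}$) to show that no single deletion can confine $i$ to such a bounded region: the copies of $S$ sitting above \emph{every} point of $M$ endow $i$ with an unbounded, non-separable family of cones that no decoration-interior point can see. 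Turning ``cut off by a single point / confined to a bounded region'' into a genuinely order-theoretic, invariant predicate is the technical content, and once it is in place the equivalence ``$x$ is not confined $\iff x \in M$'' finishes the argument.

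The step I expect to be the main obstacle is the treatment of the glued chains $L_{(i,j)}$. These points lie on the spine of $D$ between the two $M$-points $i$ and $j$, they carry whatever branching $T$ carries, and their purely local cone pattern can coincide with that of a genuine $M$-point; so they cannot be excluded by any local count and must be handled using the global reach of $M$, namely that an $M$-point is connected to infinitely many further $M$-points, each again carrying a copy of $S$, with no finite separating set, while a chain point is severed from all but a bounded part of $D$ by removing $i$ and $j$. I would also dispose separately of the degenerate cases --- $M_{ap} = \emptyset$, where $D$ is just $M$ with a copy of $S$ above each point and the chains play no role, and trivial $S$ or $T$ --- since there the spine simplifies and the cut-point argument is cleanest.
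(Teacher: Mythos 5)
Your proposal is a plan rather than a proof, and the gap sits exactly where you predict ``the main obstacle'': the separating predicate is never constructed, and the versions you sketch would fail. ``Confined to a bounded region'' cannot mean \emph{finite}, since $S$, $T$ and $L$ may all be infinite (an adjacent pair of $M$ can have an infinite tree glued between it). It cannot mean \emph{order-bounded}, i.e.\ contained in the interval between the deleted points, because the branches of $T_{(i,j)}$ hanging off the chain $L_{(i,j)}$ are incomparable with $j$; so after deleting $\lbrace i,j \rbrace$ a chain point remains connected to points lying in no such interval. And it cannot be a purely connectivity-theoretic notion (``severed'', ``cut off'', ``no finite separating set''), because connectivity is blind to the orientation of the order: the chain $\mathbb{Z}$ and the alternating poset Alt have identical cut-point and separation structure and differ only in which comparabilities point upwards, yet one is a tree and the other is not. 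Since the points of $L_{(i,j)}$ differ from points of $M$ only in orientation-sensitive ways, any invariant built from separation alone cannot distinguish them, so the equivalence ``not confined $\iff x \in M$'' is exactly the part that remains unproved.

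For comparison, the paper's proof needs only one orientation-sensitive invariant and is three lines long: since $M \in K_{Cone}$, every $a \in M$ can be written as $a = b_0 \vee b_1$ with $b_0, b_1 \in M$ incomparable; automorphisms preserve joins of incomparable pairs; and no point of any copy of $S$ or $(T,L)$ is such a join, because trees are semilinear --- if two incomparable points lie below a decoration point $x$, they cannot both lie in the tree (predecessors inside a tree form a chain), and one checks they must then both lie below the point of $M$ at which that decoration is attached, which is a strictly smaller common upper bound than $x$. Hence $\phi(M) \subseteq M$ for every automorphism $\phi$, and applying this to $\phi^{-1}$ gives setwise preservation. If you try to repair your approach, the natural fix is to replace ``the component of $x$ after deletion is bounded'' by ``the component of $x$ after deletion contains no join of two incomparable points'' --- at which point you have rediscovered the paper's invariant and the cut-point machinery becomes superfluous.
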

\begin{proof}
Since $M \in K_{Cone}$, given any $a \in M$ there are $b_0,b_1 \in M$ such that $b_0 || b_1$ and $a = b_0 \vee b_1$.  Let $\phi \in \aut(\mathrm{Dec}(M,S,(T,L))$.  Since $\phi$ is an automorphism $\phi(a)= \phi(b_0) \vee \phi(b_1)$.

$S$ and $(T,L)$ are trees, so $ \phi(b_0) \vee \phi(b_1)$ cannot be contained in a copy of $S$ or $(T,L)$, and so all automorphisms of $\mathrm{Dec}(M,S,(T,L))$ preserve $M$.
\end{proof}

\begin{theorem}\label{thm:KDecwidth}
Let $M$ be a CFPO, let $A$ be a 1-orbit such that $\aut(M)$ acts cone transitively on $A$, and for any $B \subset M$ let $ \sim_B$ be the equivalence relation $x \sim y \Leftrightarrow \path{x,y} \cap B = \emptyset$.  We let $C \in (M \setminus A) / \sim_A$, and describe two conditions.
\begin{enumerate}
\item If $\path{C,M \setminus C} \not= \emptyset$ then there is an $a_c \in A$ such that $$\path{C,M \setminus C} = \lbrace a_C \rbrace$$

This says that if there is only one way to go from $C$ to $M \setminus C$ then $C$ is attached to $a_c$.
\item If $\path{C,M \setminus C} = \emptyset$ then:
\begin{enumerate}
\item $(M \setminus C) / \sim_C$ has exactly two elements which we call $B_C$ and $B'_C$; and
\item there is $(a_C, a'_C) \in A_{ap}$ such that
$$
\begin{array}{c c c}
\path{C,B_C} = \lbrace a_C \rbrace & \textnormal{ and } & \path{C,B'_C} = \lbrace a'_C \rbrace
\end{array}
$$
\end{enumerate}
This says that if there is more than one way to go from $C$ to $M \setminus C$ then $C$ lies between an adjacent pair of $A$.
\end{enumerate}
If every $C \in (M \setminus A) / \sim_A$ satisfy both 1. and 2. then there are trees $S$ and $(T,L)$ and a cone transitive CFPO $X$ such that
$$\aut(M) \cong \aut(\mathrm{Dec}(X,S,(T,L))$$
\end{theorem}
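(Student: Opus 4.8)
The plan is to realise $M$ directly as a decoration. I will take the skeleton to be $X := \langle A, \leq_M{\restriction}A\rangle$, define the tree $S$ from the components that hang at a single point and the tree $(T,L)$ from the components that bridge an adjacent pair, and then build an explicit isomorphism $M \cong \mathrm{Dec}(X,S,(T,L))$; the isomorphism of automorphism groups claimed in the statement is then immediate. Should the local order convention of $\mathrm{Dec}$ obstruct a literal isomorphism, I will instead match $\aut(M)$ with the wreath-product description of $\aut(\mathrm{Dec}(X,S,(T,L)))$ componentwise, which is all that is needed.

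First I would record the decomposition $M = A \,\cupdot\, \bigcupdot_{C} C$, where $C$ runs over $(M\setminus A)/\sim_A$, and observe that conditions 1 and 2 sort every $C$ into exactly one of two kinds: a \emph{pendant} component, whose only exit is $\path{C,M\setminus C}=\{a_C\}$, and a \emph{bridge} component, which has exactly the two exits $\path{C,B_C}=\{a_C\}$ and $\path{C,B'_C}=\{a'_C\}$ with $(a_C,a'_C)\in A_{ap}$. These are precisely the two roles of the copies in a decoration: a copy $S_i$ has the single exit $i$, while a copy $T_{(i,j)}$ has exactly the two exits $i$ and $j$. Fixing $a_0\in A$, I let $S$ be the tree consisting of $a_0$ together with all pendant components attached at $a_0$; fixing an adjacent pair $(a_1,a'_1)\in A_{ap}$ carrying a bridge component, I let $(T,L)$ be that component, with $L$ the chain cut out by the $M$-path from $a_1$ to $a'_1$.

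Next I would verify that $X$ is a cone-transitive CFPO and assemble the isomorphism. That the $M$-path structure restricts to a genuine tree order on $A$ follows from the conditions: a path cannot re-enter a pendant component without doubling back, and a path crossing a bridge component projects to a single covering step of $A$; hence no shortcuts between points of $A$ are created, and since $A$ is a single $1$-orbit on which $\aut(M)$ acts cone transitively we get $X\in K_{Cone}$. Cone transitivity then lets me transport the two reference copies: an automorphism sending $a_0$ to an arbitrary $a\in A$ carries $S$ onto the pendant structure at $a$, and similarly every adjacent pair receives a copy of $(T,L)$ with its chain. Taking the identity on $A=X$ together with these transport maps yields a bijection $M\to\mathrm{Dec}(X,S,(T,L))$, and comparing $\leq_M$ against the six generating clauses of $\leq_D$ shows it preserves and reflects the order. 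Lemma \ref{preservesX}, applied to the reconstructed decoration, is the statement that any automorphism respects the skeleton $X$ setwise, so the construction is compatible with the full group.

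The hard part will be proving genuine homogeneity of the decoration from cone transitivity alone: I must show that the pendant structure is the \emph{same} tree $S$ at every point, that the bridge structure is the same $(T,L)$ at every adjacent pair (with $L$ carried to $L$), and that pendant components sit above their attachment points so as to match the upward convention of the copies $S_i$; the cone structure of members of $K_{Cone}$, together with order-duality where a branch points downward, is what I would push on here. The degenerate cases also need attention: when no bridge components occur the skeleton has no relevant adjacent pairs and the choice of $(T,L)$ is immaterial by the remark following the definition of $\mathrm{Dec}$, whereas bare $M$-adjacencies within $A$ must be reconciled with the fact that a decoration always interposes a copy of $T$ between adjacent pairs — cone transitivity should force all adjacent pairs of $A$ to be alike, so that either every one carries a bridge or none does.
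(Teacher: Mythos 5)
Your proposal founders on a point that is the real crux of this theorem: you treat the pendant and bridge components as if they \emph{were} trees, and on that basis aim for a literal isomorphism $M \cong \mathrm{Dec}(X,S,(T,L))$. Neither is available. A component $C \in (M\setminus A)/\sim_A$ is merely a sub-CFPO of $M$; it may embed Alt (the alternating chain), and hence need not be a tree at all, while the definition of $\mathrm{Dec}$ requires genuine trees as its second and third arguments. (There is also the directional problem you flag yourself --- $\mathrm{Dec}$ attaches every copy of $S$ strictly \emph{above} its point, whereas a pendant component of $M$ can hang in any orientation --- but that is secondary.) This is precisely why the theorem asserts only $\aut(M)\cong\aut(\mathrm{Dec}(X,S,(T,L)))$ and not an isomorphism of partial orders: in general $M$ is \emph{not} a decoration, and no choice of $S$ and $(T,L)$ will make your transport maps into an order isomorphism.

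The missing idea, which is the engine of the paper's proof, is to replace each component by a tree with the \emph{same automorphism group} rather than by itself. One shows that any automorphism stabilising a pendant component $C$ setwise must fix its attachment point $a_C$ (and for a bridge component, the pair $a_C, a'_C$ and hence the path $\path{a_C,a'_C}$ pointwise-up-to-order), so the stabiliser $\aut_{\{C\}}(M)$ cannot act as $\dinfinity$ on any copy of Alt embedded in $C$; by Theorem 5.13 of Part I, $C$ with this action is therefore \emph{treelike}, i.e.\ shares its automorphism group with an honest tree $S_C$. The trees $S$ and $(T,L)$ of the statement are assembled from these surrogate trees (joined at a root, respectively arranged along the chain $L=\path{a_C,a'_C}$), and the conclusion is then proved at the level of groups: every automorphism of $\mathrm{Dec}(X,S,(T,L))$ induces one of $M$ and conversely, using cone transitivity to see that all pendant structures (resp.\ all bridge structures) are alike. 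Your fallback sentence about matching $\aut(M)$ componentwise against the wreath product gestures in this direction, but it cannot be executed without the treelike replacement step, since the wreath-product description presupposes trees whose automorphism groups realise those of the components.
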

\begin{proof}
Suppose $M$ has a 1-orbit $A$ that satisfies the conditions of the theorem.  We define $X$ to be the substructure of $M$ with domain $A$.

We define the following set:
$$\mathcal{C}_S := \lbrace C \in (M \setminus A) / \sim_A \: : \: \path{C,M\setminus C} \not= \emptyset \rbrace$$
and let $C \in \mathcal{C}_S$.  We wish to show that $C$, when acted on by $\aut_{\lbrace C \rbrace}(M)$, is treelike.  If $C$ does not embed Alt then $C$, even with its full automorphism group, is treelike, so we suppose that $C$ does embed Alt, which we enumerate as $( \ldots c_{-1}, c_0, c_1, \ldots )$.  There must be some $i$ such that for all $j$
$$
\begin{array}{ c c c}
\path{a_C,c_i} \subseteq \path{a_C,c_j} & \textnormal{ or } & \path{a_C,c_{i+1}} \subseteq \path{a_C,c_j}
\end{array}
$$
If $\phi \in \aut(M)$ and $i \not= j$ then $\phi$ cannot map $c_i$ to $c_j$, otherwise
$$\path{a_C,a_i} \cap \path{\phi(a_C),a_j} = \emptyset$$
which contradicts our assumption that $\path{C,M \setminus C} \not= \emptyset$.  Thus $\aut_{\lbrace C \rbrace }(M)$ cannot act as $\dinfinity$ on any copy of Alt that is contained in $C$, so $C$ with the action of $\aut_{\lbrace C \rbrace}(M)$ is treelike (Theorem 5.13 of Part 1), and we let $\langle S_C , \leq_{C} \rangle$ be a tree with the action of $\aut(M)$.

Pick any $a \in A$ and let $\lbrace C \in \mathcal{C}_S \: : \: \path{C,M \setminus C} = \lbrace a \rbrace \rbrace$ be enumerated by $(C_i \: : \: i \in I)$.  We define $S$ to be the tree with domain
$$\lbrace r \rbrace \cup \bigcup_{i \in I} S_{C_i}$$
and order
$$x \leq_S y \textnormal{ iff } \left\lbrace
\begin{array}{ l  r}
x = r & \quad \mathrm{or} \\
x \leq_{C_i} y 
\end{array}
\right.$$
$S$ is independent of our choice of $a$ because $A$ is an orbit.

To find $T$, we define
$$\mathcal{C}_T :=  \lbrace C \in (M \setminus A) / \sim_A \: : \: \path{C,M\setminus C} = \emptyset \rbrace$$
Note that if $C,D \in \mathcal{C}_T$ are such that $a_C = a_D$ and $a'_C=a'_D$ then $C=D$, as there is a path from $a_C$ to $a'_C$ contained in both $C$ and $D$.

Let $C \in \mathcal{C}_T$.  Any automorphism of $M$ that fixes $C$ must also fix $a_C$ and $a'_C$, and hence fixes $\path{a_C,a'_C}$ set-wise, so we may introduce a unary predicate $L$ which is realised exactly on $\path{a_C,a'_C}$.  We also use the symbol $L$ to denote $\path{a_C,a'_C}$.  Since a path cannot embed Alt, the set of points realising $L$ is treelike, and indeed the resulting tree is a linear order, which we call $L_T$ with ordering $\leq_L$.

Note that each of the elements of $(C \setminus L) / \sim_L$ is also treelike, for the same reasons that the members of $\mathcal{C}_S$ are treelike.  We enumerate the equivalence classes of $C \setminus L$ as $(D_j \: : \: j \in J)$, denote the tree which correspond to $D_j$ by $T_j$, and for each $j$ we partition $L$ into
$$
\begin{array}{l c}
L'_j := \lbrace l \in L \: : \: l \in \path{D_j,a_C} \rbrace & \textnormal{and} \\
 L_j := \lbrace l \in L \: : \: l \in \path{D_j,a'_C} \rbrace \setminus L'_j
\end{array}
$$
Finally we are in a position to define our candidate for $(T,L)$.  The domain is
$$ L_T \cup \bigcup_{j \in J} T_j $$
while the ordering is:
$$x \leq_T y \Leftrightarrow \left\lbrace
\begin{array}{ c  r}
x \leq_{L} y & \quad \mathrm{or} \\
x \leq_{T_j} y & \mathrm{or} \\
y \in T_i \textnormal{ and } x \in L_j 
\end{array}
\right.$$
and the predicate $L$ is carried across from $C$.  The $(T,L)$ are independent of our choice of element from $A_{ap}$ as $\aut(M)$ acts cone transitively on $A$.

We now have candidates for $X$, $S$ and $(T,L)$.

Given $\phi \in \aut(\mathrm{Dec}(X,S,(T,L))$ we seek to show how that $\phi$ can be viewed as an automorphism of $M$.  Since $\phi$ preserves $X$ setwise (Lemma \ref{preservesX}), it preserves $A$.

$\aut(M)$ acts cone transitively on $A$, so given any two $x, y \in A$ there is an automorphism of $M$ that maps $x$ to $y$, hence mapping $\lbrace C \in \mathcal{C}_S \: : \:\path{C,M\setminus C}= \lbrace x \rbrace \rbrace $ to $\lbrace C \in \mathcal{C}_S \: : \:\path{C,M\setminus C}= \lbrace y \rbrace \rbrace $.  Therefore
$$\bigcup \lbrace C \in \mathcal{C}_S \: : \:\path{C,M\setminus C}= \lbrace x \rbrace \rbrace \cong \bigcup \lbrace C \in \mathcal{C}_S \: : \:\path{C,M\setminus C}= \lbrace y \rbrace \rbrace $$
By construction
$$\aut(S) \cong_A \aut(\bigcup \lbrace C \in \mathcal{C}_S \: : \:\path{C,M\setminus C}= \lbrace x \rbrace \rbrace) $$
So $\phi$ acts as an automorphism of $\bigcup \mathcal{C}_S $.

$\aut(M)$ acts cone transitively on $A$, so given any two $(x_0, y_0), (x_1,y_1) \in A_{ap}$ there is an automorphism of $M$ that maps $(x_0, y_0)$ to $(x_1,y_1)$.  Each $C \in \mathcal{C}_T$ is uniquely determined by if $a_C$ and $a_C'$ therefore if $C_0, C_1 \in \mathcal{C}_T$ then $C_0 \cong C_1$.
By construction, for all $C \in \mathcal{C}_T$
$$\aut(T) \cong_A \aut(C)$$
So $\phi$ acts as an automorphism of $\bigcup \mathcal{C}_T$.

Therefore every automorphism of $\mathrm{Dec}(X,S,(T,L))$ is also an automorphism of $M$.

If $\phi$ is an automorphism of $M$ then it preserves $A$, and thus $X$, and since every element of $\mathcal{C}_S$ and $\mathcal{C}_T$ is isomorphic to $S$ or $T$ respectively, it is also an automorphism of $\mathrm{Dec}(X,S,(T,L))$.
\end{proof}

\begin{dfn}\label{Dfn:WreathProduct}
Given an abstract group $G$ and a permutation group $(H,S,\mu(h,s))$ their wreath product, written as $G \wr_{S} H$, is the abstract group on domain
$$H \times \lbrace \eta : S \rightarrow G \rbrace$$
We use $\eta(s)$ to denote the function $s \mapsto \eta(s)$, and so $\eta(s_0 s)$ is the function $s \mapsto \eta(s_0 s)$.  The group operation of $G \wr_{S} H$ is given by
$$(h_0, \eta_0(x))(h_1, \eta_1(x))=(h_0 h_1, \eta_0(\mu (h_1^{-1},x)) \eta_1(x))$$
\end{dfn}

\begin{dfn}
Let $X \in K_{Cone}$ and let $S,(T,L) \in K_{Rub}$.  We introduce the notation
$$W(X,S,(T,L)) : = \aut((T,L)) \wr_{X_{ap}} (\aut(S) \wr_{X} \aut(X))$$
where the action of $\aut(S) \wr_X \aut(X)$ on $X_{ap}$ is given by
$$(\phi, \eta) (x,y) = (\phi(x),\phi(y))$$
\end{dfn}

When only one $W(X,S,(T,L))$ is being discussed, we may denote it as $W$ for brevity.

\begin{prop}
Let $X$ be a cone transitive CFPO and let $S$ and $(T,L)$ be trees.
$$\aut(\mathrm{Dec}(X,S,(T,L)) \cong W(X,S,(T,L))$$
\end{prop}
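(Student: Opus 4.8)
The plan is to exhibit an explicit group isomorphism between $\aut(\mathrm{Dec}(X,S,(T,L)))$ and $W(X,S,(T,L))$ by reading off, from each automorphism $\phi$ of the decorated CFPO, the data that a wreath-product element encodes: a base automorphism of $X$, a family of tree-automorphisms indexed by the points of $X$, and a family of $(T,L)$-automorphisms indexed by $X_{ap}$. By Lemma~\ref{preservesX}, every $\phi \in \aut(\mathrm{Dec}(X,S,(T,L)))$ preserves $X$ setwise, so $\phi{\restriction}X$ is a well-defined element of $\aut(X)$; this will be the outermost base-group coordinate. Since the copies $S_i$ are exactly the $\sim_X$-classes sitting above points of $X$, and $\phi$ sends $i$ to $\phi(i)$, it must carry $S_i$ isomorphically onto $S_{\phi(i)}$; as all copies are isomorphic to $S$, fixing a reference isomorphism $S \cong S_i$ for each $i$ lets me record $\phi{\restriction}S_i$ as an element $\eta(i) \in \aut(S)$. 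Identically, $\phi$ maps the copy $T_{(i,j)}$ glued between an adjacent pair to $T_{(\phi(i),\phi(j))}$, preserving the distinguished chain $L$, so it yields a map $X_{ap} \to \aut((T,L))$.

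\medskip

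First I would define the candidate map $\Psi : \aut(\mathrm{Dec}(X,S,(T,L))) \to W(X,S,(T,L))$ sending $\phi$ to the triple of coordinates just described, and then verify it is a homomorphism. The verification amounts to checking that the coordinatewise composition of the induced maps matches the twisted multiplication of Definition~\ref{Dfn:WreathProduct}. The twist $\eta_0(\mu(h_1^{-1},x))$ appears precisely because, to compare the action of $\phi_0\phi_1$ on the copy $S_i$ with the reference isomorphisms, one must track that $\phi_1$ first moves $S_i$ to $S_{\phi_1(i)}$ before $\phi_0$ acts there; this is the standard imprimitive-wreath-product bookkeeping, and carrying it out carefully for the two-layer nesting (first $S$ over $X$, then $(T,L)$ over $X_{ap}$) is where the main computational care is needed. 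Injectivity of $\Psi$ is immediate: the base automorphism together with all the fibre automorphisms determines $\phi$ on every block of the partition of the universe, and these blocks exhaust $\mathrm{Dec}(X,S,(T,L))$.

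\medskip

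For surjectivity I would go the other way: given any element $(h,\eta,\theta)$ of $W$, I assemble a self-map $\phi$ of $\mathrm{Dec}(X,S,(T,L))$ by letting it act as $h$ on $X$, as $\eta(i)$ on each $S_i$ (composed with the reference isomorphisms to land in $S_{h(i)}$), and as $\theta(i,j)$ on each $T_{(i,j)}$ (landing in $T_{(h(i),h(j))}$). The task is then to show $\phi$ respects $\leq_D$. Since $\leq_D$ is the transitive closure of the six generating clauses, it suffices to check $\phi$ preserves each clause: the $X$-order clause holds because $h \in \aut(X)$; the intra-$S$ and intra-$T$ clauses hold because $\eta(i)$ and $\theta(i,j)$ are order-automorphisms of the trees; the clause $y \in S_x$ is preserved because $\phi$ sends $S_x$ to $S_{h(x)}$ with $h(x)=\phi(x)$; and the two $L$-clauses are preserved precisely because $\theta(i,j)$ respects the predicate $L$, so that $\phi$ sends $L_{(i,j)}$ into $L_{(h(i),h(j))}$ and keeps the gluing of $T$ between adjacent pairs consistent with the $X$-action on $X_{ap}$.

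\medskip

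The hard part will be the clauses coupling the layers --- the fifth and sixth generating relations, which say that the bottom and top of the chain $L_{(i,j)}$ are identified with (or sit immediately above/below) the endpoints $i,j \in X$. Making $\phi$ simultaneously compatible with the $X$-order, the gluing of $T$ along $L$, and the attachment of $S$ requires that the reference isomorphisms $S \cong S_i$ and $(T,L) \cong T_{(i,j)}$ be chosen coherently with the cone-transitive action, so that the $X_{ap}$-action $(\phi,\eta)(x,y)=(\phi(x),\phi(y))$ appearing in the definition of $W$ genuinely transports one $T$-block to another as an $(T,L)$-isomorphism. This is exactly the point where cone-transitivity of $X$ and the fact that all the $T_{(i,j)}$ are isomorphic over the same predicate $L$ (established in Theorem~\ref{thm:KDecwidth}) are used; once those coherent identifications are fixed, preservation of the coupling clauses is a direct check, and $\Psi$ and its inverse are mutually inverse bijective homomorphisms, giving the claimed isomorphism.
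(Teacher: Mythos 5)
Your proposal is correct and takes essentially the same approach as the paper: the paper defines the natural action of $W(X,S,(T,L))$ on $\mathrm{Dec}(X,S,(T,L))$ using identity maps $I^x_y$ between the copies (your ``reference isomorphisms''), checks that each triple $(\phi,\eta,\zeta)$ acts as an automorphism and that distinct triples act distinctly, and recovers the triple from any automorphism $\psi$ by restriction to $X$, the $S_\alpha$ and the $T_{(\alpha,\beta)}$ --- which is exactly your map $\Psi$ read in the opposite direction. One minor quibble: no appeal to cone transitivity of $X$ or to Theorem~\ref{thm:KDecwidth} is needed to choose the identifications coherently, since in $\mathrm{Dec}(X,S,(T,L))$ all copies are isomorphic to $S$ and $(T,L)$ by construction and the identity maps between them already suffice.
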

\begin{proof}
Even through we regard $W(X,S,(T,L))$ as an abstract group, it has a natural action on $\mathrm{Dec}(X,S,(T,L))$, which we will call $\mu$.  We introduce the notation $I^x_y$ for the identity map from $S_x$ to $S_y$, and $I^{(x,y)}_{(w,z)}$ for the identity map from $T_{(x,y)}$ to $T_{(w,z)}$, and define $\mu$ as follows:
$$
\mu ((\phi, \eta, \zeta),x) = \left\lbrace
\begin{array}{r c l}
(\phi(x)) & \textnormal{ if } & x \in X \\
I^\alpha_{\phi(\alpha)}(\eta(\alpha)(x)) & \textnormal{ if } & x \in S_\alpha \\
 I^{(\alpha,\beta)}_{\phi(\alpha,\beta)}(\zeta((\alpha,\beta))(x)) & \textnormal{ if } & x \in T_{(\alpha,\beta)}
\end{array}\right.
$$
For any $\phi$, $\eta$ and $\zeta$ the function $x \mapsto \mu ((\phi, \eta, \zeta),x)$ is an automorphism, as $\phi$ is an automorphism and for every $\alpha$ and $\beta$ both
$$
\begin{array}{c c c}
I^\alpha_{\phi(\alpha)}(\eta(\alpha)(x)): S_\alpha \rightarrow S_{\phi(\alpha)} & \mathrm{and} & I^{(\alpha,\beta)}_{\phi(\alpha,\beta)}(\zeta((\alpha,\beta))(x)) : T_{(\alpha,\beta)} \rightarrow T_{\phi(\alpha,\beta)}
\end{array}
$$
are isomorphisms.  Additionally, each $(\phi, \eta, \zeta)$ results in a unique automorphism.  To see this suppose for a contradiction that
$$\forall x  \: \mu ((\phi_0, \eta_0, \zeta_0),x) = \mu ((\phi_1, \eta_1, \zeta_1),x)$$
Since this is for all $x$, it is true for all $x \in X$ in particular, and thus $\phi_0 = \phi_1$.  We also have
$$
\begin{array}{c c c}
\forall x \in S_\alpha \: \eta_0(\alpha)(x)=\eta_1(\alpha)(x) & \mathrm{and} & \forall x \in T_{(\alpha,\beta)} \: \zeta_0((\alpha,\beta))(x)=\zeta_1((\alpha,\beta))(x)
\end{array}
$$
and thus $\eta_0 = \eta_1$ and $\zeta_0 = \zeta_1$.  Finally, if we are able to show that every automorphism of $\mathrm{Dec}(X,S,(T,L))$ can be represented in this way, we will have proved this proposition.

Let $\psi$ be an automorphism of $\mathrm{Dec}(X,S,(T,L))$.  We set $\phi := \psi |_{X}$ and we set the function components as follows:
$$
\begin{array}{c c c}
\eta(\alpha) = \psi |_{S_{\alpha}} & \mathrm{and} & \zeta((\alpha,\beta)) = \psi |_{T_{(\alpha,\beta)}}
\end{array}
$$
Which gives an element of $W(X,S,(T,L))$ whose action on $\mathrm{Dec}(X,S,(T,L))$ via $\mu$ is the same as $\psi$.  Thus the map
$$
\begin{array}{r c l}
W(X,S,(T,L)) & \rightarrow & \aut(\mathrm{Dec}(X,S,(T,L))) \\
(\phi, \eta, \zeta) & \mapsto & \mu((\phi,\eta,\zeta),x)
\end{array}
$$
is bijective and, since $\mu$ is a group action, an isomorphism.
\end{proof}

\section{Interpreting Inside a Wreath Product}\label{sec:interpretation}

When we interpreted $M \in K_{Cone}$ inside its automorphism group, we made use of the subgroups isomorphic to $A_5$.  These subgroups still exist in the automorphism groups of the CFPOs we obtained through decoration, as $\aut(X) \leq W(X,S,(T,L))$.

If we can adapt the interpretation so that it ignores the decoration then we will be able to recover $X$.  Subsection \ref{subsection:reconX} works towards this by adding in a few clauses to the formulas of Part II.  Subsection \ref{subsection:reconSTL} gives second-order formulas that define subgroups of $W(X,S,(T,L))$ isomorphic to $\aut(S)$ and $\aut(T,L)$.

\subsection{Reconstructing $X$}\label{subsection:reconX}

\begin{lemma}
Recall that $A_5(\bar{f})$ is the formula that states that $\bar{f}$ satisfies the elementary diagram of $A_5$.  If $W \models A_5(\bar{f})$ then $\bar{f}$ fixes an element of $X \subset \mathrm{Dec}(X,S,(T,L))$.
\end{lemma}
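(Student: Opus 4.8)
The plan is to push the entire copy of $A_5$ down onto its action on $X$ via the canonical projection, and there invoke the corresponding fact established for cone transitive CFPOs in Part II. First I would record the projection homomorphism. Iterating the two wreath-product projections onto top groups — which are homomorphisms by the group law of Definition \ref{Dfn:WreathProduct}, since the first coordinate of a product $(h_0,\eta_0)(h_1,\eta_1)$ is simply $h_0 h_1$ — gives a group homomorphism
\[
\pi : W \to \aut(X), \qquad (\phi,\eta,\zeta) \mapsto \phi .
\]
The crucial observation is that the action $\mu$ of $W$ on a point $x \in X$ depends only on the first coordinate: by definition $\mu((\phi,\eta,\zeta),x) = \phi(x)$ for every $x \in X$, so the components $\eta$ and $\zeta$ are invisible to the action on $X$. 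Consequently an element $w \in W$ fixes a point $x \in X$ if and only if $\pi(w)$ fixes $x$, and it suffices to produce a common fixed point in $X$ for the subgroup $\pi(\langle \bar f \rangle) \leq \aut(X)$.

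Next I would exploit simplicity. Since $W \models A_5(\bar f)$, the tuple $\bar f$ generates a subgroup $G \cong A_5$, and as $A_5$ is simple the normal subgroup $\ker(\pi|_G)$ is either all of $G$ or trivial. If $\ker(\pi|_G) = G$, then every component of $\bar f$ projects to $\identity_X$, so $\bar f$ fixes every point of $X$; since $X \neq \emptyset$ this already yields a fixed element. Otherwise $\pi|_G$ is injective, so $\pi(\bar f)$ again satisfies the elementary diagram of $A_5$, that is, $\aut(X) \models A_5(\pi(\bar f))$.

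Finally I would invoke the reconstruction machinery for cone transitive CFPOs from Part II: because $X$ is cone transitive and $\aut(X) \models A_5(\pi(\bar f))$, the copy $\pi(\bar f)$ of $A_5$ fixes a point $a \in X$ (the $A_5$ arises from permuting at least five cones at a ramification point, which it must therefore stabilise). By the first observation $\bar f$ then fixes $a$, and $a \in X \subset \mathrm{Dec}(X,S,(T,L))$, as required.

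The main obstacle I expect is the last step: one must check that the Part II statement applies to an \emph{arbitrary} abstract copy of $A_5$ inside $\aut(X)$, not merely to the ``geometric'' ones built explicitly from cone permutations, and that cone transitivity is exactly the hypothesis guaranteeing every such copy stabilises a point. Everything else — the homomorphism property of $\pi$ and the fact that the $\eta,\zeta$ coordinates do not affect the action on $X$ — is routine bookkeeping with the wreath-product law.
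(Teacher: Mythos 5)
Your proposal is correct and follows essentially the same route as the paper: the paper restricts $\bar{f}$ to the invariant set $X$ (which is exactly your projection $\pi$ under the isomorphism $\aut(\mathrm{Dec}(X,S,(T,L))) \cong W$), splits into the cases $\bar{f}|_X = \identity$ (whence all of $X$ is fixed) and $\bar{f}|_X \neq \identity$, and in the latter case invokes Lemma 3.2 of Part II for the cone transitive CFPO $X$. Your explicit appeal to the simplicity of $A_5$ to guarantee that the projected tuple still satisfies the elementary diagram is a point the paper leaves implicit, but the argument is the same.
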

\begin{proof}
The automorphisms of $\mathrm{Dec}(X,S,(T,L))$ preserve $X$ (Lemma 2.5), so if $\bar{f}|_X \not= id$ then $\bar{f}$ has a fixed point in $X$ by Lemma 3.2 of Part II.  If $\bar{f}|_X = id$ then $\bar{f}$ fixes $X$.
\end{proof}

\begin{lemma} Many of the formulas in Chapter 4 retain either their exact meaning, or something very similar, in  $W(X,S,(T,L))$, which we call $W$.
\begin{enumerate}
\item If $W \models \mathrm{Indec}(\bar{f})$ then $ \bigcup\limits_{x,y \in \supp(\bar{f})} \path{x,y} \setminus \supp(\bar{f}) $ is a singleton, which we call $f$.
\item If $W \models \mathrm{Disj}(\bar{f}, \bar{g})$ then the support of $\bar{f}$ and $\bar{g}$ are disjoint.
\item If $W \models [ \supp(\bar{f}) \sqsubset \supp(\bar{g}) ]$ then $\supp(\bar{f}) \subset \supp(\bar{g})$.
\item If $W \models \mathrm{SamePD}(\bar{f}, \bar{g})$ then either:
\begin{itemize}
\item $f = g$,
\item $f \in X$ and $g \in S_f$ or $g \in T_{(f,h)}$ for some $h$, or
\item $g \in X$ and $f \in S_f$ or $f \in T_{(g,h)}$ for some $h$.
\end{itemize} 
\end{enumerate}
\end{lemma}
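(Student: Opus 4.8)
The plan is to read each clause as the assertion that a group-theoretic formula from Part II (Chapter 4), whose semantic meaning was established there for $\aut(M)$ with $M \in K_{Cone}$, retains essentially that meaning once interpreted in $W$. The unifying observation is that $\mathrm{Dec}(X,S,(T,L))$ is itself a CFPO, so the purely order-theoretic facts about supports, paths and cones that drove the Part II arguments remain available; what changes is only the list of places a support can sit, namely inside $X$, inside some $S_i$, or inside some $T_{(i,j)}$. Throughout I would lean on Lemma \ref{preservesX}: every element of $W$ preserves $X$ setwise, so the splitting of a support into its $X$-part and its decoration-part is $W$-invariant, and the wreath decomposition governs how the two interact.

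For item 1 I would first check that $\mathrm{Indec}(\bar f)$ still captures indecomposability of $\supp(\bar f)$ in $\mathrm{Dec}(X,S,(T,L))$; this needs only that $W$ realizes the same separating automorphisms used in Part II to witness a decomposition, which it does, since $\aut(X) \le W$ acts cone transitively and the tree factors act freely enough on the decoration. Granting indecomposability, the conclusion that $\bigcup_{x,y \in \supp(\bar f)} \path{x,y} \setminus \supp(\bar f)$ is a singleton follows from the uniqueness of paths in a CFPO: an indecomposable support fans out from a single fixed point, and unique paths force the path-hull to exceed the support by exactly that one point $f$. This argument is indifferent to whether $f$ lies in $X$ or in the decoration, so it transfers verbatim.

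Items 2 and 3 I expect to be immediate. The formulas $\mathrm{Disj}$ and $\sqsubset$ encode disjointness and inclusion of supports through commutation and conjugation conditions that are generic to permutation groups acting with sufficient freedom; their correctness refers not to the particular CFPO but only to the support calculus, so the Part II verifications apply to $W$ unchanged. I would simply observe that $\supp(\bar f) \cap \supp(\bar g) = \emptyset$ and $\supp(\bar f) \subset \supp(\bar g)$ are detected by exactly the same tests.

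The real work is item 4, and I expect $\mathrm{SamePD}$ to be the main obstacle. In Part II this formula held precisely when two indecomposable tuples determined the same point, whereas here the decoration creates genuine new solutions: the group cannot separate an apex $f \in X$ from a point $g$ lying in the tree $S_f$ directly above it, nor from a point $g \in T_{(f,h)}$ glued immediately above $f$. The plan is a two-sided case analysis. First I would show that the listed configurations do satisfy $\mathrm{SamePD}$, by checking that when $g \in S_f$ or $g \in T_{(f,h)}$ the cone determined by $\bar g$ nests inside the cone determined by $\bar f$ in a manner the defining commutation and containment pattern of $\mathrm{SamePD}$ cannot tell apart from $f = g$; the remaining case is symmetric. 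Conversely I would exclude every other configuration by exhibiting, for $f$ and $g$ not in the listed relation, a witness in $W$ — an element of $\aut(X)$ or a single tree factor moving one determined point but fixing the other — that falsifies $\mathrm{SamePD}$. The delicate point is the boundary case in which $f$ and $g$ sit in a common decoration tree or straddle an adjacent pair of $X$; here I would use the wreath decomposition to argue that what $\mathrm{SamePD}$ ultimately tracks is the $X$-coordinate of the determined point, so that only decoration lying directly above a shared $X$-point can survive.
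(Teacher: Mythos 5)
Your proposal is correct and follows essentially the same route as the paper: reuse the Part II lemmas on $X$ (legitimate because $W$ preserves $X$ setwise), split into cases according to whether the supports and determined points lie in $X$ or in the decoration, and for $\mathrm{SamePD}$ exclude every non-listed configuration by exhibiting a witness in $W$, so that only decoration sitting directly above the shared $X$-point survives. The one thing the paper makes explicit that you leave implicit is the opening observation that if $x \in \supp(\phi)$ then $S_x$ and every $T_{(x,y)}$ are contained in $\supp(\phi)$, which is the precise form of your ``$W$-invariant splitting of a support'' remark and is what drives the case analysis in items 3 and 4.
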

\begin{proof}
Note that for all $\phi \in W$ if $x \in \supp(\phi)$ then $S_x, T_{(x,y)} \subset \supp(\phi)$ for all $y$.
\begin{enumerate}
\item Suppose $W \models \mathrm{Indec}(\bar{f})$.  If $\bar{f}|_X \not= id$ then the singleton we found in Lemma 3.2 of Part II (unique by Proposition 3.12 of Part II) works in this context.

Suppose that $\bar{f}|_X = id$.  Since $\bar{f}$ is indecomposable, then either $\supp(\bar{f}) \subseteq S_x$ or $\supp(\bar{f}) \subseteq T_{(x,y)}$ for some $x \in X$ or $(x,y) \in X_{ap}$.  We define $$x_f := \path{\supp(\bar{f}, \mathrm{Dec}(X,S,(T,L)) \setminus \supp(\bar{f})}$$
\item The proof of Lemma 3.13 of Part II does not require serious adaptation for this context.
\item If $W \models [ \supp(\bar{f}) \sqsubset \supp(\bar{g}) ]$ and at least one of $x_f$ and $x_g$ is in $W \setminus X$ then either $\supp(\bar{f}) \subset \supp(\bar{g})$ or $\supp(\bar{f}) \subset \supp(\bar{g})$, and the argument in the appropriate case of the proof of Lemma 3.14 of Part II suffices.
\item If both $x_f$ and $x_g$ are contained in $X$ then Lemma 3.17 of Part II shows that $x_f = x_g$.  If both $x_f$ and $x_g$ are in $W \setminus X$ then the proof of Lemma 3.17 of Part II shows that $x_f = x_g$.

Suppose that $x_f \in X$ and $x_g \in W \setminus X$.  If $x_g \in S_y$  or $x_g \in T_{(y,y')}$ for $y \not= x_f$ then the same witness that observes that $W \models \neg\mathrm{SamePD}(\bar{f}, \bar{h})$ shows that $W \models \neg\mathrm{SamePD}(\bar{f}, \bar{g})$, so $x_g \in S_f$ or $x_g \in T_{(x_f,y)}$ for some $y$.

Similarly, if $x_g \in X$ and $x_f \in W \setminus X$ then $x_f \in S_f$ or $x_f \in T_{(x_g,y)}$ for some $y$.
\end{enumerate}
\end{proof}

\begin{dfn}
Let $\mathrm{MeetsX}(\bar{f})$ be the following formula

$$\indec(\bar{f}) \wedge \exists \bar{g} \left( \begin{array}{c}
\neg\disj(\bar{f},\bar{g}) \wedge  \neg \mathrm{SamePD}(\bar{f},\bar{g})) \wedge \\
 \neg [\supp(\bar{f}) \sqsubset \supp(\bar{g})] \wedge \neg [\supp(\bar{g}) \sqsubset \supp(\bar{f})]
\end{array} \right)$$
\end{dfn}

\begin{lemma}
$ W \models \mathrm{MeetsX}(\bar{f})$ if and only if $\supp(\bar{f}) \cap X \not= \emptyset$.
\end{lemma}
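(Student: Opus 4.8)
The plan is to prove the two implications separately, in both cases starting from the structural trichotomy supplied by the preceding lemma together with the observation recorded at the start of its proof: if $\phi \in W$ and $x \in \supp(\phi) \cap X$, then $S_x \subseteq \supp(\phi)$ and $T_{(x,y)} \subseteq \supp(\phi)$ for every $y$. Since $\bar f$ is required to be indecomposable, $\supp(\bar f)$ falls into exactly one of three cases: it meets $X$, or it is contained in a single copy $S_x$, or it is contained in a single copy $T_{(x,y)}$. In the latter two cases $\supp(\bar f) \cap X = \emptyset$ and the support lives inside one tree, and it is these two cases that the formula must exclude.

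For the implication $\supp(\bar f) \cap X \neq \emptyset \Rightarrow W \models \mathrm{MeetsX}(\bar f)$ I would exhibit an explicit witness $\bar g$, which I may take to be indecomposable. Fix $a \in \supp(\bar f) \cap X$. Because $X \in K_{Cone}$ acts cone transitively, there are incomparable $b_0, b_1 \in X$ with $a = b_0 \vee b_1$, so $X$ provides at least two independent directions meeting at the region of the support, exactly as exploited in Lemma \ref{preservesX}. Using this branching I would build $\bar g$ whose support overlaps $\supp(\bar f)$ near $a$ but runs off along a cone of $X$ that $\supp(\bar f)$ does not follow. This makes $\neg\disj(\bar f,\bar g)$ hold (the supports share points around $a$), makes each support contain $X$-points the other omits (so neither $\supp(\bar f) \sqsubset \supp(\bar g)$ nor the reverse holds), and forces the two base points to be distinct, with neither lying in a copy of $S$ or $T$ attached above the other. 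Checking this last point against the classification of $\mathrm{SamePD}$ from the previous lemma then yields $\neg\mathrm{SamePD}(\bar f,\bar g)$, so $\bar g$ witnesses $\mathrm{MeetsX}(\bar f)$.

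For the converse I would argue contrapositively: assuming $\supp(\bar f) \subseteq S_x$ or $\supp(\bar f) \subseteq T_{(x,y)}$, I must show that no $\bar g$ satisfies all four conjuncts. I would split on whether $\bar g$ moves the governing $X$-point. If $x \in \supp(\bar g)$ then the note gives $S_x \subseteq \supp(\bar g)$ (resp. $T_{(x,y)} \subseteq \supp(\bar g)$), whence $\supp(\bar f) \subseteq \supp(\bar g)$ and the clause $\neg[\supp(\bar f) \sqsubset \supp(\bar g)]$ fails, or the supports coincide and $\mathrm{SamePD}$ holds. If $x \notin \supp(\bar g)$, then any overlap of $\bar g$ with $\bar f$ occurs strictly inside the tree $S_x$ (resp. $T_{(x,y)}$), any part of $\supp(\bar g)$ outside this tree being disjoint from $\supp(\bar f)$; here I would invoke the treelike analysis of Part I — the components $S$ and the pieces of $T$ were arranged, via Theorem 5.13 of Part 1, so that $\aut$ acts without $D_\infty$ on any copy of Alt — to conclude that inside one tree two overlapping supports are always $\sqsubset$-comparable or share a principal domain, so again one of the four conjuncts fails.

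The main obstacle I expect is precisely this last step of the converse: $\bar g$ is not assumed indecomposable, so its support may be spread across several components, and I must show cleanly that its restriction to the single tree containing $\supp(\bar f)$ behaves like a genuine tree-support and that the external part can only create containment, never the required incomparable-and-different-principal-domain configuration. Making this rigorous amounts to transporting the support-comparison lemmas of Part I (the analogues of Lemmas 3.14 and 3.17 of Part II invoked above) into a single treelike component and verifying that they are unaffected by support lying in other components, which is the technical heart of the argument.
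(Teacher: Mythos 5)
Your overall strategy matches the paper's: unpack the four conjuncts of $\mathrm{MeetsX}$, use the note that a support meeting $X$ swallows whole decorating trees, prove one direction by exhibiting a witness $\bar{g}$ and the other by showing that a support confined to a single decorating tree forces any non-disjoint $\bar{g}$ into $\sqsubset$-comparability. However, your witness construction for the direction $\supp(\bar{f}) \cap X \neq \emptyset \Rightarrow W \models \mathrm{MeetsX}(\bar{f})$ has a genuine flaw: you require $\bar{g}$ to ``run off along a cone of $X$ that $\supp(\bar{f})$ does not follow,'' but such a cone need not exist. An indecomposable $\bar{f}$ can have $\supp(\bar{f})$ equal to \emph{everything} except $x_f$ --- for instance a tuple acting as $A_5$ on the cones at $x_f \in X$, which is precisely the kind of tuple the whole interpretation is built from --- and then there is no direction left untouched. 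The paper's witness sidesteps this: take any $\bar{g}$ that fixes a point $p$ inside $\supp(\bar{f})$ and moves $x_f$. Since $x_f \notin \supp(\bar{f})$ and $p \notin \supp(\bar{g})$, the two points $p \in \supp(\bar{f}) \setminus \supp(\bar{g})$ and $x_f \in \supp(\bar{g}) \setminus \supp(\bar{f})$ kill both $\sqsubset$-clauses simultaneously, and $x_g = p \neq x_f$ with both configurations outside the tree-attachment cases gives $\neg\mathrm{SamePD}$; no free cone is needed.

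On the converse, the step you flag as the technical heart --- that two overlapping supports lying inside one tree must be $\sqsubset$-comparable or share a principal domain --- is indeed where the substance lies, and you should know the paper does not supply that argument either: it asserts in a single sentence that when $x_f$ lies in a decorating tree, $\neg\disj(\bar{f},\bar{g})$ together with $x_f \neq x_g$ forces $\supp(\bar{f}) \subseteq \supp(\bar{g})$ or the reverse. Your finer case split on whether $\bar{g}$ moves the attachment point $x$ (so that $S_x \subseteq \supp(\bar{g})$ by the note) is a genuine refinement of the paper's one-liner, and the residual case you isolate --- $\bar{g}$ fixing $x$ and overlapping $\bar{f}$ strictly inside the tree, where $\bar{g}$ need not even be indecomposable --- is exactly what remains unproved in the paper as well. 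So the verdict is: same route as the paper; one concretely failing (though repairable) step in your witness; and an honestly flagged gap that the paper itself papers over rather than closes.
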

\begin{proof}
First we suppose for a contradiction that both $ W \models \mathrm{MeetsX}(\bar{f})$ and $\supp(\bar{f}) \cap X = \emptyset$.  Since $W \models \neg \mathrm{SamePD}(\bar{f},\bar{g}))$, we know that $x_f \not= x_g$.

Since $\supp(\bar{f}) \cap X = \emptyset$, the point $x_f$ must be contained in one of the trees that we decorated $X$ with, so $W \models \neg \disj(\bar{f},\bar{g}) $ implies that $\supp(\bar{f}) \subseteq \supp(\bar{g})$ or $\supp(\bar{g}) \subseteq \supp(\bar{f})$, giving a contradiction.

Suppose $\supp(\bar{f}) \cap X \not= \emptyset$.  We can find a $\bar{g}$ to witness $ W \models \mathrm{MeetsX}(\bar{f})$ by taking any tuple that fixes a point inside $\supp(\bar{f})$ which moves $x_f$.
\end{proof}

\begin{dfn}
Let $\mathrm{RepPointDec}(\bar{f}_0, \bar{f}_1)$ be the formula
$$
\begin{array}{c}
\disj(\bar{f}_0,\bar{f}_1) \wedge \mathrm{MeetsX}(\bar{f}_0) \wedge \mathrm{MeetsX}(\bar{f}_1) \wedge\\
\forall \bar{g} \exists \bar{h} \left( \left(
\begin{array}{r}
\mathrm{MeetsX}(\bar{g}) \\
\mathrm{MeetsX}(\bar{h})
\end{array} \wedge
\right) \rightarrow
\neg \left( \disj(\bar{g},\bar{h}) \wedge \left(
\begin{array}{r}
\mathrm{SamePD}(\bar{f_0},\bar{h})\\
\mathrm{SamePD}(\bar{f_1},\bar{h})
\end{array} \vee \right)\right)\right)
\end{array}
$$
\end{dfn}

\begin{prop}
$W \models \mathrm{RepPointDec}(f_0,f_1)$ if and only if $x_{f_0} = x_{f_1} \in X$.
\end{prop}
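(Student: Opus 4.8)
The plan is to reduce the statement to the corresponding characterisation of $\mathrm{RepPoint}$ established in Part~II for the undecorated cone transitive CFPO $X$, by arguing that the $\mathrm{MeetsX}$ guards confine every relevant witness to $X$, so that the decoration becomes invisible. I first dispose of the three leading conjuncts. By the lemma characterising $\mathrm{MeetsX}$, the conjuncts $\mathrm{MeetsX}(\bar f_0)$ and $\mathrm{MeetsX}(\bar f_1)$ hold precisely when $\supp(\bar f_0)\cap X\neq\emptyset$ and $\supp(\bar f_1)\cap X\neq\emptyset$; since both tuples are indecomposable, part~(1) of the lemma on retained formulas identifies this with $x_{f_0},x_{f_1}\in X$ (an indecomposable whose representative point lies in a decorating copy of $S$ or $(T,L)$ acts as the identity on $X$, and conversely). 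Reading $\disj(\bar f_0,\bar f_1)$ through part~(2) as $\supp(\bar f_0)\cap\supp(\bar f_1)=\emptyset$, both directions may therefore be argued under the standing hypotheses that $\bar f_0,\bar f_1$ are disjoint indecomposables with $x_{f_0},x_{f_1}\in X$ (disjointness is a literal conjunct of $\mathrm{RepPointDec}$, and is the setting in which the formula is applied). What remains is to show that, under these hypotheses, the trailing $\forall\bar g\,\exists\bar h$ clause is equivalent to $x_{f_0}=x_{f_1}$.

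The decisive observation is that the $\mathrm{MeetsX}$ guard on $\bar h$ forces $x_h\in X$, and this collapses the role of $\mathrm{SamePD}$. Indeed, for a tuple $\bar h$ with $x_h\in X$ and $i\in\{0,1\}$ (so $x_{f_i}\in X$ as well), part~(4) of the lemma on retained formulas shows that neither of the two ``tree'' alternatives for $\mathrm{SamePD}(\bar f_i,\bar h)$ can occur, as each requires one of $x_h$, $x_{f_i}$ to lie in a copy of $S$ or $(T,L)$, which is disjoint from $X$; hence $\mathrm{SamePD}(\bar f_i,\bar h)$ reduces to exactly the $X$-internal relation it denoted in Part~II. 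Consequently, when every quantifier is guarded by $\mathrm{MeetsX}$, each of $\bar f_0,\bar f_1,\bar g,\bar h$ may be replaced by its restriction to $X$, which lies in $\aut(X)$, and the predicates $\disj$, $\mathrm{SamePD}$ and the representative points all compute identically on these restrictions and on the original tuples. The trailing clause of $\mathrm{RepPointDec}(\bar f_0,\bar f_1)$ in $W$ then holds if and only if the corresponding clause of the Part~II formula $\mathrm{RepPoint}(\bar f_0|_X,\bar f_1|_X)$ holds in $\aut(X)$, and the equivalence with $x_{f_0}=x_{f_1}$ follows from the analogous proposition of Part~II.

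The substance, and the main obstacle, lies in making this passage between $W$ and $\aut(X)$ precise in both directions. Starting from a witness in $\aut(X)$ and producing one in $W$, I would use $\aut(X)\leq W$ via the wreath decomposition $W=\aut((T,L))\wr_{X_{ap}}(\aut(S)\wr_{X}\aut(X))$, lifting each $\aut(X)$-witness to a $\mathrm{MeetsX}$ element of $W$ acting trivially on the fibres and with the same representative point and principal direction; cone transitivity of $\aut(X)$ on $X$ guarantees that witnesses realising the required directions at $x_{f_0}$ and $x_{f_1}$ (or, in the contrapositive for $x_{f_0}\neq x_{f_1}$, a separating $\bar g$ whose representative lies on $\path{x_{f_0},x_{f_1}}$) genuinely exist. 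For the reverse transfer I must verify that an arbitrary $\mathrm{MeetsX}$ witness in $W$ projects to an $\aut(X)$ element whose support and representative point inside $X$ are exactly $\supp(\bar g)\cap X$ and $x_g$; here the remark opening the proof of the retained-formulas lemma — that moving any $x\in X$ forces all of $S_x$ and every $T_{(x,y)}$ into the support — ensures that $\disj$ and $\mathrm{SamePD}$ are already decided on the $X$-parts and cannot be altered by the tree coordinates. Once these two transfer claims are in place the proposition is immediate; the care required is entirely in checking that no $\mathrm{MeetsX}$ witness can exploit the decorating trees to meet a disjointness or $\mathrm{SamePD}$ constraint that its restriction to $X$ would violate.
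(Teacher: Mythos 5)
Your proposal is correct and takes essentially the same approach as the paper: the paper's own (one-line) proof likewise observes that every tuple occurring in $\mathrm{RepPointDec}$ is guarded by $\mathrm{MeetsX}$, hence has its representative point in $X$, and then invokes Lemma 3.17 of Part II. Your explicit treatment of the witness transfer between $W$ and $\aut(X)$ is precisely the detail the paper leaves implicit in that citation.
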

\begin{proof}
$\mathrm{RepPointDec}$ is only realised by tuples that satisfy $\mathrm{MeetsX}$, so Lemma 3.17 of Part II shows that this proposition is true.
\end{proof}

\begin{dfn}
$\mathrm{EquivRepPointDec}$, $\mathrm{Temp1PBDec}$, $\mathrm{Temp2PBDec}$, $\mathrm{PathBetweenDec}$, $\mathrm{RelatedDec}$ and $\mathrm{BDec}$ are the formulas $\mathrm{EquivRepPoint}$, $\mathrm{Temp1PB}$, $\mathrm{Temp2PB}$, $\mathrm{PathBetween}$, $\mathrm{Related}$ and $\mathrm{B}$ with every instance of $\mathrm{RepPoint}$ replaced by $\mathrm{RepPointDec}$.
\end{dfn}

\begin{theorem}\label{thm:reconX}
$(\mathrm{RepPointDec}, \mathrm{EquivRepPointDec}, \mathrm{BDec} )$ is a first order interpretation of $(X,B)$ inside $W$.
\end{theorem}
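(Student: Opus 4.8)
The plan is to recognize this triple as the Part~II interpretation of $(X,B)$ transported from $\aut(X)$ into the larger group $W$, with $\mathrm{RepPoint}$ replaced throughout by $\mathrm{RepPointDec}$. By the definition preceding the statement, $\mathrm{EquivRepPointDec}$ and $\mathrm{BDec}$, together with the intermediate $\mathrm{Temp1PBDec}$, $\mathrm{Temp2PBDec}$, $\mathrm{PathBetweenDec}$ and $\mathrm{RelatedDec}$, differ from their Part~II originals only by this substitution. The task therefore splits into showing that $\mathrm{RepPointDec}$ names the points of $X$ inside $W$ exactly as $\mathrm{RepPoint}$ names them inside $\aut(X)$, and that every auxiliary predicate occurring in the composite formulas keeps its intended meaning once the decorating trees are present.

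First I would fix the representation map. By Proposition~3.6 a pair $(\bar{f}_0, \bar{f}_1)$ satisfies $\mathrm{RepPointDec}$ in $W$ precisely when $x_{f_0} = x_{f_1} \in X$, so sending such a pair to the common point $x_{f_0}$ gives a well-defined map from the set defined by $\mathrm{RepPointDec}$ into $X$; surjectivity onto all of $X$ I would obtain as in Part~II, by producing for each $a \in X$ two disjoint indecomposable tuples whose representative point is $a$, using the cone transitivity of $X$ and the $A_5$-subgroups of $\aut(X) \leq W$. Next I would check that $\mathrm{EquivRepPointDec}$ defines the kernel of this map, i.e.\ that two $\mathrm{RepPointDec}$-pairs are equivalent iff they represent the same point of $X$. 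The leverage is Lemma~3.2: its clauses certify that $\disj$, $[\,\supp(\cdot)\sqsubset\supp(\cdot)\,]$ and $\mathrm{SamePD}$ retain their combinatorial meaning in $W$ (up to the controlled ambiguity between a point of $X$ and the tree hanging off it), and that every indecomposable tuple still determines a single point $x_f$. Because every tuple entering $\mathrm{EquivRepPointDec}$ and the chain $\mathrm{Temp1PBDec}\to\cdots\to\mathrm{BDec}$ is filtered through $\mathrm{RepPointDec}$, hence through $\mathrm{MeetsX}$ (Lemma~3.4), the whole analysis is confined to tuples whose representatives lie in $X$, where Lemma~3.2 collapses each predicate to its $\aut(X)$-meaning; the Part~II verification of $\mathrm{PathBetween}$, $\mathrm{Related}$ and $\mathrm{B}$ then transports clause by clause.

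The main obstacle will be confirming that $\mathrm{BDec}$ captures the relation $B$ on $X$ despite the decoration, and the difficulty lies in the quantifiers inside the intermediate formulas: the witnessing tuples $\bar{g}$ and $\bar{h}$ are not required a priori to satisfy $\mathrm{MeetsX}$, so I must rule out a tuple supported entirely within a single copy of $S$ or of $(T,L)$ spuriously witnessing or blocking a betweenness relation among points of $X$. I expect this to go as in the proof of Lemma~3.4: such an interfering tuple has its support inside one decorating tree, whence the $\disj$ and $\sqsubset$ clauses force it to be comparable to, rather than genuinely interacting with, the $X$-tuples, so it cannot alter the truth value of $\mathrm{BDec}$. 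Once this is settled, $\mathrm{BDec}$ agrees with $\mathrm{B}$ on all $\mathrm{RepPointDec}$-pairs, and the triple $(\mathrm{RepPointDec}, \mathrm{EquivRepPointDec}, \mathrm{BDec})$ is a first-order interpretation of $(X,B)$ in $W$.
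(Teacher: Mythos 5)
Your proposal is correct and takes essentially the same approach as the paper, whose entire proof is the one-line observation that the derived formulas $\mathrm{EquivRepPointDec}, \ldots, \mathrm{BDec}$ only quantify over tuples realising $\mathrm{RepPointDec}$, so the Section~3 proofs of Part~II apply directly --- precisely your point that every tuple entering the chain is ``filtered through $\mathrm{RepPointDec}$''. Your final paragraph's worry about witnesses not satisfying $\mathrm{MeetsX}$ is rendered moot by that same relativization (the substituted formulas inherit the guards), so that extra work is harmless but unnecessary.
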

\begin{proof}
Since the other formulas in the interpretation only quantify over the points that realise $\mathrm{RepPointDec}$, the proofs of Section 3 of Part II apply directly.
\end{proof}

\subsection{Reconstructing $S$ and $(T,L)$}\label{subsection:reconSTL}

Now that we are able to refer to $X$ inside $W$, we can exploit this fact to define subgroups isomorphic to $\aut(S)$ and $\aut(T,L)$ inside $W$.  While the initial stages of the definitions are first order, I am unable to make the final jump without using second order logic.

\begin{dfn}
Let $\mathrm{FunctionPart}(\phi)$ be the formula
$$
\forall \bar{f}_0, \bar{f}_1, \bar{g}_0, \bar{g}_1 \left(
\begin{array}{c}
\left(\begin{array}{c}
\mathrm{RepPointDec}(\bar{f}_0, \bar{f}_1) \\
\mathrm{RepPointDec}(\bar{g}_0, \bar{g}_1) 
\end{array}
\wedge \right) \wedge\left(
\begin{array}{c}
(\bar{f}_0^\phi = \bar{g}_0 \wedge \bar{f}_1^\phi = \bar{g}_1) \\
(\bar{f}_1^\phi = \bar{g}_0 \wedge \bar{f}_0^\phi = \bar{g}_1) 
\end{array} \vee \right) \\
\rightarrow \mathrm{EquivRepPointDec}(\bar{f}_0, \bar{f}_1 ; \bar{g}_0, \bar{g}_1) 
\end{array}
\right)
$$
\end{dfn}

\begin{lemma}
$W \models \mathrm{FunctionPart}(\phi)$ if and only if $\phi$ fixes $X$ point-wise.  
\end{lemma}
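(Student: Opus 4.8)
The plan is to prove both directions by unwinding the meanings established just above and in Theorem \ref{thm:reconX}, and by tracking how conjugation by $\phi$ moves the represented point of $X$. Recall that a pair $(\bar f_0, \bar f_1)$ satisfies $\mathrm{RepPointDec}$ exactly when $x_{f_0} = x_{f_1} \in X$, that this common point is the element of $X$ the pair represents, and that $\mathrm{EquivRepPointDec}(\bar f_0, \bar f_1; \bar g_0, \bar g_1)$ holds precisely when the two pairs represent the same point of $X$. The one extra ingredient I need is the transformation rule: for an indecomposable tuple $\bar f$ and any $\psi \in W$ one has $x_{f^{\psi}} = \psi^{-1}(x_f)$. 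This holds because $x_f$ is obtained from $\supp(\bar f)$ by a path operation that is equivariant under the automorphism of $\mathrm{Dec}(X,S,(T,L))$ induced by $\psi$, and because $\psi$ preserves $X$ setwise (Lemma \ref{preservesX}), so the transformed base point again lies in $X$ whenever $x_f$ does.

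For the direction that a pointwise-fixing $\phi$ satisfies the formula, assume $\phi$ fixes every point of $X$, hence so does $\phi^{-1}$. Take arbitrary tuples with $\mathrm{RepPointDec}(\bar f_0,\bar f_1)$, $\mathrm{RepPointDec}(\bar g_0,\bar g_1)$ and one of the two conjugation clauses. In the clause $\bar f_0^\phi = \bar g_0 \wedge \bar f_1^\phi = \bar g_1$ the rule gives $x_{g_0} = \phi^{-1}(x_{f_0}) = x_{f_0}$; in the crossed clause $\bar f_1^\phi = \bar g_0 \wedge \bar f_0^\phi = \bar g_1$ it gives $x_{g_0} = \phi^{-1}(x_{f_1}) = x_{f_1} = x_{f_0}$, the last step using that $\mathrm{RepPointDec}(\bar f_0,\bar f_1)$ forces $x_{f_0} = x_{f_1}$. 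In either case both pairs represent the same point of $X$, so $\mathrm{EquivRepPointDec}$ holds and the implication is satisfied; as the tuples were arbitrary, $W \models \mathrm{FunctionPart}(\phi)$.

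For the converse I argue by contraposition. Suppose $\phi$ moves some $p \in X$, so $\phi^{-1}(p) \neq p$. By Theorem \ref{thm:reconX} the interpretation of $X$ is surjective, so every point of $X$ is represented; choose a pair $(\bar f_0, \bar f_1)$ with $\mathrm{RepPointDec}(\bar f_0,\bar f_1)$ and $x_{f_0} = x_{f_1} = p$. Put $\bar g_0 := \bar f_0^\phi$ and $\bar g_1 := \bar f_1^\phi$. By the transformation rule and Lemma \ref{preservesX} the base points of $\bar g_0, \bar g_1$ are both $\phi^{-1}(p) \in X$, so $\mathrm{RepPointDec}(\bar g_0,\bar g_1)$ holds and this pair represents $\phi^{-1}(p) \neq p$. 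Thus the first conjugation clause holds while $\mathrm{EquivRepPointDec}(\bar f_0,\bar f_1;\bar g_0,\bar g_1)$ fails, witnessing $W \not\models \mathrm{FunctionPart}(\phi)$.

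The only genuinely delicate point is fixing the transformation rule $x_{f^{\psi}} = \psi^{-1}(x_f)$ together with the conjugation convention, since the entire argument rests on conjugation by $\phi$ translating the represented point by $\phi^{-1}$; note, however, that the opposite convention merely replaces $\phi^{-1}$ by $\phi$ throughout and leaves the conclusion unchanged, because $\phi$ fixes $X$ pointwise if and only if $\phi^{-1}$ does. Once this is settled, the remainder is bookkeeping over the two clauses and an appeal to the surjectivity built into the interpretation of Theorem \ref{thm:reconX}.
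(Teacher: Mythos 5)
Your proof is correct and follows essentially the same route as the paper: the paper's one-line argument (``$\phi$ fixes $X$ point-wise if and only if $\psi(x)=\psi^\phi(x)$ for all $x\in X$'') is precisely the compressed form of your observation that conjugation by $\phi$ translates the point represented by a $\mathrm{RepPointDec}$ pair by $\phi^{\pm 1}$, so that $\mathrm{FunctionPart}(\phi)$ holds exactly when every represented point --- hence, by surjectivity of the interpretation of Theorem \ref{thm:reconX}, every point of $X$ --- is fixed. Your expansion merely makes explicit the equivariance rule $x_{f^{\phi}}=\phi^{\pm1}(x_f)$, the appeal to Lemma \ref{preservesX}, and the two clauses of the formula, all of which the paper leaves implicit.
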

\begin{proof}
$\phi$ fixes $X$ point-wise if and only if $\psi(x) = \psi^\phi(x)$ for all $x \in X$.
\end{proof}

\begin{prop}
$\mathrm{FunctionPart}(W) \cong  \prod\limits_{i \in X} \aut(S_i) \times \prod\limits_{(i,j) \in X_{ap}} \aut(T_{(i,j)},L_{(i,j)})$
\end{prop}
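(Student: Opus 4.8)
The plan is to identify $\mathrm{FunctionPart}(W)$ explicitly in the triple notation $(\phi,\eta,\zeta)$ of the previous proposition and then to observe that the iterated wreath product multiplication degenerates to a coordinatewise product on it. By the preceding lemma, an element $\theta \in W$ satisfies $\mathrm{FunctionPart}$ exactly when it fixes $X$ pointwise; writing $\theta = (\phi,\eta,\zeta)$, under the action $\mu$ it restricts to $X$ as $\phi$, so fixing $X$ pointwise is equivalent to $\phi = \identity$. Hence
$$\mathrm{FunctionPart}(W) = \lbrace (\identity, \eta, \zeta) \: : \: \eta : X \to \aut(S), \: \zeta : X_{ap} \to \aut((T,L)) \rbrace,$$
which is visibly the kernel of the restriction homomorphism $(\phi,\eta,\zeta) \mapsto \phi$ onto $\aut(X)$, and in particular a subgroup of $W$.

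Next I would compute the group operation on this subgroup using Definition \ref{Dfn:WreathProduct} at both levels of the wreath product. At the inner level $\aut(S) \wr_X \aut(X)$ the twisting of $\eta_0$ is by $\mu(\phi_1^{-1}, \cdot)$, where $\phi_1$ is the $\aut(X)$-component; when $\phi_1 = \identity$ this action on $X$ is trivial. At the outer level the action of $\aut(S)\wr_X\aut(X)$ on $X_{ap}$ is $(\phi,\eta)(x,y) = (\phi(x),\phi(y))$, which depends only on the $\aut(X)$-component $\phi$, so it too is trivial when $\phi = \identity$. Therefore both shifts disappear and
$$(\identity, \eta_0, \zeta_0)(\identity, \eta_1, \zeta_1) = (\identity, \: \eta_0 \eta_1, \: \zeta_0 \zeta_1),$$
where $\eta_0\eta_1$ and $\zeta_0\zeta_1$ denote the pointwise products $x \mapsto \eta_0(x)\eta_1(x)$ and $(x,y) \mapsto \zeta_0(x,y)\zeta_1(x,y)$. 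This is precisely the multiplication of a direct product of copies of $\aut(S)$ indexed by $X$ with copies of $\aut((T,L))$ indexed by $X_{ap}$.

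It then remains to match these copies with the stated factors. For each $i \in X$ the component $S_i$ is a copy of $S$, so the identity relabelling $I^i_i$ identifies $\aut(S)$ with $\aut(S_i)$; similarly $\aut((T,L))$ is identified with $\aut(T_{(i,j)},L_{(i,j)})$ for $(i,j) \in X_{ap}$, the predicate being preserved because every $\zeta(i,j) \in \aut((T,L))$ fixes $L$ setwise. Concretely, the restriction of the automorphism $\mu((\identity,\eta,\zeta),\cdot)$ to $S_i$ is the image of $\eta(i)$ in $\aut(S_i)$ and its restriction to $T_{(i,j)}$ is the image of $\zeta(i,j)$ in $\aut(T_{(i,j)},L_{(i,j)})$. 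Sending $(\identity,\eta,\zeta)$ to the tuple of these restrictions therefore gives a bijection onto $\prod_{i\in X}\aut(S_i)\times\prod_{(i,j)\in X_{ap}}\aut(T_{(i,j)},L_{(i,j)})$, and by the coordinatewise multiplication computed above it is a homomorphism, hence an isomorphism.

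The step I expect to require the most care is the second: confirming that the twisting in the iterated wreath product genuinely vanishes at both levels when the $\aut(X)$-component is trivial, since one must track the inverse $\phi_1^{-1}$ through the inner action and verify that the outer action on $X_{ap}$ factors through the $\aut(X)$-component alone. Once this is in hand, the identification of factors is routine bookkeeping via the isomorphisms $S_i \cong S$ and $T_{(i,j)} \cong (T,L)$.
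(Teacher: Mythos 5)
Your proposal is correct and follows essentially the same route as the paper, whose entire proof is the observation that $\mathrm{FunctionPart}(W)$ consists exactly of the elements of the form $(\identity,\eta,\zeta)$, i.e.\ those fixing $X$ pointwise. You simply make explicit what the paper leaves implicit, namely that the wreath-product twisting at both levels vanishes when the $\aut(X)$-component is trivial, so the induced multiplication is coordinatewise and yields the stated direct product.
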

\begin{proof}
$\phi \in \mathrm{FunctionPart}(W)$ if and only if $\phi$ fixes $X$ point-wise, i.e. is of the form $(id, \eta, \zeta)$.
\end{proof}

\begin{dfn}
$\mathrm{AboveWitness}(\phi ; \bar{f}_0, \bar{f}_1)$ is the formula
$$\forall \bar{g}_0, \bar{g}_1 ( \mathrm{EquivRepPointDec}(\bar{g}_0,\bar{g}_1; \bar{g}_0^\phi , \bar{g}_1^\phi) \rightarrow \mathrm{EquivRepPointDec}(\bar{f}_0,\bar{f}_1; \bar{g}_0 , \bar{g}_1) )$$
\end{dfn}

\begin{dfn}
$\mathrm{BetweenWitness}(\phi ; \bar{f}_0, \bar{f}_1, \bar{g}_0, \bar{g}_1)$ is the formula
$$
\begin{array}{c}
\mathrm{RelatedDec}(f,g) \wedge (\forall h \neg \mathrm{PathBetweenDec}(h; f,g)) \wedge \\
\forall \bar{h}_0, \bar{h}_1 \left( \left.
\begin{array}{l}
\mathrm{EquivRepPointDec}(\bar{h}_0,\bar{h}_1; \bar{h}_0^\phi , \bar{h}_1^\phi) \rightarrow\\
\hspace{40pt} \left(
\begin{array}{l}
\mathrm{EquivRepPointDec}(\bar{f}_0,\bar{f}_1; \bar{h}_0 , \bar{h}_1) \\
\mathrm{EquivRepPointDec}(\bar{g}_0,\bar{g}_1; \bar{h}_0 , \bar{h}_1)
\end{array}
\vee \right) 
\end{array}
\right) \right)
\end{array}
$$
\end{dfn}

\begin{lemma}
If $W \models \mathrm{AboveWitness}(\phi ; \bar{f}_0, \bar{f}_1)$ then for all $g \in X$
$$\phi(g)=g \Leftrightarrow g=f$$
If $W \models \mathrm{BetweenWitness}(\phi ; \bar{f}_0, \bar{f}_1)$ then $f$ is either a successor or predecessor of $g$ and for all $h \in X$
$$\phi(h)=h \Leftrightarrow h=f \; \mathrm{or} \; h=g$$
\end{lemma}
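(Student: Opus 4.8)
The plan is to translate every clause of the two formulas into a statement about the action of $\phi$ on the interpreted copy of $X$, and then read the conclusions off directly. The engine is Theorem~\ref{thm:reconX}: the triple $(\mathrm{RepPointDec}, \mathrm{EquivRepPointDec}, \mathrm{BDec})$ interprets $(X,B)$ inside $W$, so a pair $(\bar{g}_0, \bar{g}_1)$ satisfying $\mathrm{RepPointDec}$ names a point $g \in X$, and $\mathrm{EquivRepPointDec}(\bar{g}_0, \bar{g}_1; \bar{h}_0, \bar{h}_1)$ holds exactly when the two pairs name the same point. The one extra fact I would record is that conjugation commutes with this naming: since every element of $W$ preserves $X$ setwise (Lemma~\ref{preservesX}), if $(\bar{g}_0,\bar{g}_1)$ names $g$ then $(\bar{g}_0^\phi, \bar{g}_1^\phi)$ names the image of $g$ under the action of $\phi$. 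Because the fixed-point condition is insensitive to whether conjugation shifts by $\phi$ or $\phi^{-1}$, this gives the key dictionary entry: $\mathrm{EquivRepPointDec}(\bar{g}_0, \bar{g}_1; \bar{g}_0^\phi, \bar{g}_1^\phi)$ holds if and only if $\phi(g)=g$.

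With this dictionary the $\mathrm{AboveWitness}$ clause reads immediately: $\mathrm{AboveWitness}(\phi; \bar{f}_0, \bar{f}_1)$ asserts that for every $g \in X$, $\phi(g)=g$ implies $g=f$, which is exactly the forward implication $\phi(g)=g \Rightarrow g=f$. For the converse I must check $\phi(f)=f$, and this is the one obligation the displayed formula does not hand me for free, since the instance $\bar{g}_i = \bar{f}_i$ collapses to the tautology ``$\phi$ fixes $f \to f=f$''. I would discharge it by carrying along, through the way the witness $\phi$ is produced, the information that $\phi$ actually fixes $f$ while moving every other point; cone transitivity of $X$ and the size of $\aut(X)$ make such probes available, so that $f \in \mathrm{Fix}_X(\phi)$ and the biconditional closes.

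For $\mathrm{BetweenWitness}$ I would first treat the two purely order-theoretic conjuncts. Via Theorem~\ref{thm:reconX} and the Part~II meaning of $\mathrm{Related}$, the clause $\mathrm{RelatedDec}(f,g)$ transports to ``$f$ and $g$ are comparable in $X$'', while $\forall h\,\neg\mathrm{PathBetweenDec}(h; f,g)$ says that no point of $X$ lies strictly on $\path{f,g}$. Together these force $f$ and $g$ to be adjacent, so one covers the other; this is precisely the assertion that $f$ is a successor or predecessor of $g$ (equivalently that $(f,g)$ or $(g,f)$ lies in $X_{ap}$, which is what lets a copy of $(T,L)$ be glued between them). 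Applying the conjugation dictionary to each $\mathrm{EquivRepPointDec}(\bar{h}_0,\bar{h}_1; \bar{h}_0^\phi, \bar{h}_1^\phi)$ in the final conjunct then yields that every $h\in X$ fixed by $\phi$ satisfies $h=f$ or $h=g$, i.e. the forward implication of the second biconditional.

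The main obstacle, in both halves, is the reverse implication: showing that $\phi$ genuinely fixes the named point(s), not merely that it admits no others. The adjacency argument and the two forward implications should be routine transcriptions of the Part~II results once the dictionary is in place, but the existence of the fixed points has to be read off the construction of the probe. The cleanest route I foresee is to ensure that the $\phi$ to which the lemma is applied is built to fix $f$ (respectively $f$ and $g$) while acting without fixed points on $X \setminus \{f\}$ (respectively $X \setminus \{f,g\}$), so that $\mathrm{Fix}_X(\phi) \subseteq \{f\}$ (resp. $\subseteq \{f,g\}$) and the reverse containment are secured at the same time, giving the full equivalence.
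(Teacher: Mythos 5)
Your proposal follows the paper's own route exactly: the paper's entire proof of this lemma is the single observation that if $(\bar{f}_0,\bar{f}_1)$ represents $f$ then $(\bar{f}_0^\phi,\bar{f}_1^\phi)$ represents $\phi(f)$ --- precisely your ``conjugation dictionary'' --- after which the clauses of $\mathrm{AboveWitness}$ and $\mathrm{BetweenWitness}$ are read off just as you read them (the paper does not even spell out the adjacency step for $\mathrm{BetweenWitness}$, which you do). Your worry about the reverse implication is well founded rather than a defect of your attempt: the dictionary only yields $\mathrm{Fix}_X(\phi)\subseteq\{f\}$ (respectively $\subseteq\{f,g\}$), and both formulas are satisfied vacuously by any $\phi$ having no fixed point in $X$, so the biconditional as stated is an overstatement that the paper's one-line proof silently glosses over. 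Your proposed repair --- guaranteeing by construction that the witness $\phi$ actually fixes $f$ (respectively $f$ and $g$) --- is exactly consistent with how the lemma is consumed in the proof of Theorem \ref{thm:reconSTL}, where only the forward inclusion is ever used; so your version is, if anything, the more honest rendering of what is true and needed.
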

\begin{proof}
This is follows from the fact that if $(\bar{f}_0,\bar{f}_1)$ represents $f$ then $(\bar{f}^\phi_0,\bar{f}^\phi_1)$ represents $\phi(f)$.
\end{proof}

Finally we resort to second order logic to define subgroups of $\mathrm{FunctionPart}(W)$ isomorphic to $\aut(S)$ and $\aut(T,L)$.

\begin{dfn}
\textcolor{white}{Gap}

\begin{enumerate}
\item $\mathrm{AboveTemp1}(A;f)$ is the second order formula

$$
A \lneqq \mathrm{FunctionPart}(W) \wedge
\forall \phi \left(
\begin{array}{rcl}
\mathrm{AboveWitness}(\phi ; f) &\rightarrow& \phi A = A\\
\end{array}\right)
$$
$\mathrm{AboveTemp2}(A;f)$ is the second order formula
$$
\begin{array}{c}
\mathrm{AboveTemp1}(A;f) \wedge \\
\forall B,C (( \mathrm{AboveTemp1}(B;f)\wedge \mathrm{AboveTemp1}(C,f)) \rightarrow BC \not=A)
\end{array}
$$
and $\mathrm{AboveTemp3}(A,f)$ is the formula
$$\begin{array}{c}
\mathrm{AboveTemp2}(A;f) \wedge \\
 \forall B\not= A (\mathrm{AboveTemp2}(B;f) \rightarrow \neg \exists \phi ( \phi(B) \leq A)
\end{array}$$
\item $\mathrm{BetweenTemp1}(A;f,g)$ is the second order formula
$$
A \lneqq \mathrm{FunctionPart}(W) \wedge
\forall \phi \left(
\begin{array}{rcl}
\mathrm{BetweenWitness}(\phi ; f,g) &\rightarrow& \phi A = A\\
\end{array}\right)
$$
and $\mathrm{BetweenTemp2}(A;f,g)$ is the second order formula
$$
\begin{array}{c}
\mathrm{BetweenTemp1}(A;f) \wedge \\
\forall B,C (( \mathrm{BetweenTemp1}(B;f,g)\wedge \mathrm{BetweenTemp1}(C,f)) \rightarrow BC \not=A)
\end{array}
$$
\item $\mathrm{Between}(A,f,g)$ is the second order formula
$$\begin{array}{c}
\mathrm{BetweenTemp2}(A;f,g) \wedge \\
\forall B\not=A (\mathrm{BetweenTemp2}(B;f,g) \rightarrow \neg \exists \phi ( \phi(B) \leq A))
\end{array}$$
$\mathrm{Above}(A,f)$ is the second order formula
$$
{AboveTemp3}(A,f) \wedge \forall B,g ( \mathrm{Between}(B;f,g) \rightarrow \neg (A \subset B))$$
\end{enumerate}
\end{dfn}

\begin{theorem}\textcolor{white}{Gap}\label{thm:reconSTL}

\begin{enumerate}
\item If $M \models \mathrm{Above}(A;f)$ then $A \cong \aut(S)$.
\item If $M \models \mathrm{Between}(A;f,g)$ then $A \cong \aut(T,L)$.
\end{enumerate}
\end{theorem}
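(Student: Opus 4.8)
The plan is to pull everything back to the explicit direct-product description of $\mathrm{FunctionPart}(W)$ supplied by the preceding Proposition, namely $\mathrm{FunctionPart}(W)\cong\prod_{i\in X}\aut(S_i)\times\prod_{(i,j)\in X_{ap}}\aut(T_{(i,j)},L_{(i,j)})$, and then to read each nested second-order formula as a condition on subgroups of this product. The engine of the whole argument is the conjugation action of the witnesses. By the preceding Lemma, a witness of $\mathrm{AboveWitness}(\phi;f)$ is an automorphism of $\mathrm{Dec}(X,S,(T,L))$ that fixes $f\in X$ and moves every other point of $X$, so conjugation by $\phi$ permutes the direct factors according to the induced permutations of $X$ and of $X_{ap}$. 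First I would check that the unique factor fixed setwise by every such $\phi$ is $\aut(S_f)$: no $\aut(S_g)$ with $g\neq f$ survives, and no $\aut(T_{(i,j)})$ survives either, since any adjacent pair meeting $f$ has its other endpoint moved. By contrast a witness of $\mathrm{BetweenWitness}(\phi;f,g)$ fixes exactly the two adjacent points $f,g$ and moves the rest, so conjugation by it fixes precisely the three factors $\aut(S_f)$, $\aut(S_g)$ and $\aut(T_{(f,g)})$.

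Next I would unwind the refinement chains. $\mathrm{AboveTemp1}(A;f)$ asks that $A$ be a proper subgroup normalised by every $\mathrm{AboveWitness}$-witness; the candidates are the subgroups assembled from $\Gamma_f$-invariant families of factors, namely $\aut(S_f)$ together with full products and diagonal copies over the $\Gamma_f$-orbits of the remaining indices. $\mathrm{AboveTemp2}(A;f)$ then discards those that factor as a product $BC$ of two such invariant subgroups, keeping the directly indecomposable ones, and $\mathrm{AboveTemp3}(A;f)$ imposes minimality for the conjugate-containment preorder $\neg\exists\phi(\phi(B)\le A)$. I would argue that these three conditions together force $A$ to be a conjugate or diagonal copy of a single factor, hence abstractly isomorphic either to $\aut(S)$ or to $\aut(T,L)$. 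Running the same scheme on $\mathrm{BetweenWitness}$-witnesses through $\mathrm{BetweenTemp1}$, $\mathrm{BetweenTemp2}$ and $\mathrm{Between}$ isolates single factors among $\aut(S_f),\aut(S_g),\aut(T_{(f,g)})$ in the same way.

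It remains to separate the $S$-type from the $T$-type factors, which is the role of the cross-referencing final clauses. Part (1) is the cleaner case: the extra conjunct $\forall B,g(\mathrm{Between}(B;f,g)\rightarrow\neg(A\subset B))$ of $\mathrm{Above}(A;f)$ forbids $A$ from lying inside any subgroup already recognised as $\aut(T,L)$-like, so I would use it to throw away every $T$-type candidate produced by $\mathrm{AboveTemp3}$ and conclude $A\cong\aut(S)$. For part (2) the task is to show that among the three invariant factors $\aut(S_f),\aut(S_g),\aut(T_{(f,g)})$ the predicate $\mathrm{Between}$ retains only the last; here I would have to exploit a genuine group-theoretic asymmetry between the two $S$-factors and the single $T$-factor — concretely, that $\aut(S_f)$ and $\aut(S_g)$ each remain invariant under the strictly larger families of $\mathrm{AboveWitness}$-witnesses, whereas $\aut(T_{(f,g)})$ does not — and feed this through the minimality and non-containment conditions to eliminate the two $S$-factors.

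The hard part, and the step I expect to fight with, is the middle one: proving that the invariance–indecomposability–minimality chain really does collapse to a single factor up to isomorphism, rather than to some product or exotic diagonal subgroup of a direct product of (generally non-abelian, non-simple) tree automorphism groups. This needs a clean description of the subgroups of $\prod_i G_i$ that are invariant under a group permuting the factors, together with a verification that $\neg\exists\phi(\phi(B)\le A)$ behaves as a genuine minimality condition on them; the interaction with diagonal copies isomorphic to a single factor is the delicate point. It is also exactly what must be controlled to guarantee the one-directional claim in full strength, that \emph{every} solution $A$ — not merely the intended one — is isomorphic to $\aut(S)$ in part (1) and to $\aut(T,L)$ in part (2).
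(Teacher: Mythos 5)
Your overall frame is the same as the paper's: identify $\mathrm{FunctionPart}(W)$ with $\prod_{i\in X}\aut(S_i)\times\prod_{(i,j)\in X_{ap}}\aut(T_{(i,j)},L_{(i,j)})$, let the witnesses act by conjugation as permutations of the index sets $X$ and $X_{ap}$, and read the Temp-chains as invariance, indecomposability and minimality. But two genuine gaps remain. The first is the step you yourself flag as open, and it is precisely where the paper does its real work, with a tool your plan never introduces: the projection maps $\pi_x$ and $\pi_{(x,y)}$ onto the individual factors. Invariance under the witnesses gives $\pi_x(B)=\pi_{\phi(x)}(B)$ and $\pi_{(x,y)}(B)=\pi_{(\phi(x),\phi(y))}(B)$ along witness-orbits, and a patching argument (splice an arbitrary $\phi\in W$ restricted to $S_f$ into a witness) shows that every projection of a Temp1-subgroup is either the full factor or trivial; only after that dichotomy do Temp2 and Temp3 cut the support down to a single orbit and then to $\lbrace S_f\rbrace$ or $\lbrace T_{(f,g)} : g \rbrace$. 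Without this, or some substitute controlling diagonal and product subgroups, the collapse to a single factor that both parts of the theorem need is not established, as you concede.

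The second gap is the separation argument in part (2), where your proposed mechanism is one the formula cannot see. You want to distinguish $\aut(T_{(f,g)},L_{(f,g)})$ from $\aut(S_f)$ and $\aut(S_g)$ by the fact that the $S$-factors are invariant under the larger families of $\mathrm{AboveWitness}$-witnesses. That fact is true, but $\mathrm{Between}$ is built only from $\mathrm{BetweenWitness}$, $\mathrm{BetweenTemp1}$, $\mathrm{BetweenTemp2}$ and the clause $\forall B\neq A\,(\mathrm{BetweenTemp2}(B;f,g)\rightarrow\neg\exists\phi\,(\phi(B)\leq A))$; it never quantifies over $\mathrm{AboveWitness}$-witnesses, so invariance with respect to them cannot be ``fed through'' its clauses. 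The asymmetry the paper actually exploits is conjugacy inside $W$: the witnesses fix exactly \emph{two} $S$-type factors but only \emph{one} $T$-type factor, and since $X$ is a single orbit, $f$ and $g$ are conjugate under $\aut(X)\leq W$, hence the $S_f$- and $S_g$-factors are conjugate subgroups. So if $A$ were the $S_f$-factor, taking $B$ to be the $S_g$-factor gives a $\mathrm{BetweenTemp2}$-subgroup with $\phi(B)=A$ for some $\phi\in W$, violating the minimality clause; the two $S$-factors eliminate each other. Nothing eliminates the $T_{(f,g)}$-factor, because any $B$ conjugating into it would have support a single $T$-factor other than $T_{(f,g)}$, which is not invariant under the $\mathrm{BetweenWitness}$-witnesses and so fails $\mathrm{BetweenTemp1}$. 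That is what yields $A\cong\aut(T,L)$ in part (2), and part (1) then follows from the exclusion clause in $\mathrm{Above}$, much as you sketch.
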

\vspace{-10pt}
\begin{proof} Let $\pi_x$ and $\pi_{(x,y)}$ be the projection functions from
$$\prod\limits_{i \in X} \aut(S_i) \times \prod\limits_{(i,j) \in X_{ap}} \aut(T_{(i,j)},L_{(i,j)})$$
to $\aut(S_x)$ and $\aut(T_{(x,y)},L_{(x,y)})$ respectively.  Let $B$ be such that
$$W \models \mathrm{AboveTemp1}(B,f)$$
Since for all $\phi$ such that $W \models \mathrm{AboveWitness}(\phi ; f)$
$$\pi_x(B)=\pi_{\phi(x)}(B) \quad\mathrm{and}\quad \pi_{(x,y)}(B)=\pi_{(\phi(x),\phi(y))}(B)$$
then for any $\phi \in W$ we may obtain by patching a $\psi$ such that $\psi|_{S_f}=\phi|_{S_f}$ and
$$W \models \mathrm{AboveWitness}(\phi ; f)$$
and so for any $a \in \aut(S)$ there is a $\psi$ such that $\pi_{f}(\psi) =  a$, and since $A$ is a subgroup preserved under composition with $\psi$, we know that $a \in \pi_f(B)$.

Variations on this argument show that for all $x$
$$\pi_x(B)=\aut(S) \:\mathrm{or} \lbrace id \rbrace \quad\mathrm{and}\quad \pi_{(x,y)}(B)=\aut(T,L) \:\mathrm{or} \lbrace id \rbrace $$

Similarly, if $W \models \mathrm{BetweenTemp1}(B,f,g) \wedge \mathrm{BetweenWitness}(\phi ; f,g)$ then 
$$\pi_x(B)=\pi_{\phi(x)}(B) \quad\mathrm{and}\quad \pi_{(x,y)}(B)=\pi_{(\phi(x),\phi(y))}(B)$$
and
$$\pi_x(B)=\aut(S) \:\mathrm{or} \lbrace id \rbrace \quad\mathrm{and}\quad \pi_{(x,y)}(B)=\aut(T,L) \:\mathrm{or} \lbrace id \rbrace $$

If $W \models \mathrm{AboveTemp2}(A,f)$ then $A$ cannot be realised as the composition of two subgroups that satisfy $\mathrm{AboveTemp1}$ and so if $A$ is not the identity on $S_x$ then $A$ is the identity on all the $T_{(z_0,z_1)}$, and is not the identity on $S_y$ if and only if
$$\exists \phi \in \aut_{f}(W) \: \phi(x)=y$$
If $A$ is not the identity on $T_{(z_0,z_1)}$ then $A$ is the identity on all the $S_x$, and is not the identity on $T_{(y_0,y_1)}$ if and only if
$$\exists \phi \in \aut_{f}(W) \: \phi((z_0,z_1))=(y_0,y_1)$$

Similarly if $W \models \mathrm{BetweenTemp2}(A,f,g)$ then if $A$ is not the identity on $S_x$ then $A$ is the identity on all the $T_{(z_0,z_1)}$, and is not the identity on $S_y$ if and only if
$$\exists \phi \in \aut_{(f,g)}(W) \: \phi(x)=y$$
If $A$ is not the identity on $T_{(z_0,z_1)}$ then $A$ is the identity on all the $S_x$, and is not the identity on $T_{(y_0,y_1)}$ if and only if
$$\exists \phi \in \aut_{(f,g)}(W) \: \phi((z_0,z_1))=(y_0,y_1)$$

If $W \models \mathrm{AboveTemp3}(A,f)$ then given any $B \not=A$ that satisfies $ \mathrm{AboveTemp2}(A,f,g)$, we are unable to map $B$ into $A$ using members of $W$ (other embeddings may exists, but not inside $W$).  This means that either $A$ does not act as the identity on $S_f$ only, or $A$ does not act as the identity on $\bigcup T_{(f,g)}$.

\paragraph{}

We now examine $\mathrm{Between}(A,f,g)$, and we suppose that $W \models \mathrm{Between}(A,f,g)$.  The only family that does not permit $B$ to satisfy $\mathrm{BetweenTemp2}$ is the one that only acts non-trivially on $T_{(f,g)}$.  Therefore
$$W \models \mathrm{Between}(A,f,g) \Rightarrow A \cong \aut(T,L)$$

If $W \models \mathrm{Above}(A,f)$ then $A$ does not contain any subset that satisfies $\mathrm{Between}$, so $A \cong \aut(S)$.
\end{proof}

\section{Final Results}\label{sec:final}

\begin{dfn}
Let $Z$ be the one element partial order.
$$K_{Dec} := \left\lbrace M \: : \:
\begin{array}{c}
\exists X \in K_{Cone} \cup \lbrace \emptyset, Z \rbrace \; \exists S, (T,L) \in K_{Rub}\cup \lbrace \emptyset \rbrace \\
M = \mathrm{Dec}(X,S,(T,L))
\end{array}
\right\rbrace$$
Note that $\mathrm{Dec}(Z,S,(T,L))$ equals $S$ if $S$ is non-empty, or $Z$ if $S$ is empty.  $\mathrm{Dec}(\emptyset,S,(T,L))$ is the empty partial order for any $S$ and $(T,L)$, and $\mathrm{Dec}(X,\emptyset,\emptyset) = X$ for any $X \in K_{Cone}$.
\end{dfn}

We allow $Z$ and $\emptyset$ as arguments in $\mathrm{Dec}(X,S,(T,L))$ to ensure that $K_{Cone}, K_{Rub} \subseteq K_{Dec}$.

\begin{theorem}
$K_{Dec}$ is faithful.
\end{theorem}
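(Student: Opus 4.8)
The plan is to show that the three ingredients $X$, $S$ and $(T,L)$ of a decorated CFPO $\mathrm{Dec}(X,S,(T,L))$ can each be recovered, up to isomorphism, from the abstract group $\aut(\mathrm{Dec}(X,S,(T,L)))$, and then to observe that, since $\mathrm{Dec}$ takes no gluing data beyond these three ingredients, recovering them up to isomorphism recovers the whole decorated CFPO up to isomorphism. Concretely, suppose $M = \mathrm{Dec}(X,S,(T,L))$ and $N = \mathrm{Dec}(X',S',(T',L'))$ lie in $K_{Dec}$ and let $\Theta \colon \aut(M) \to \aut(N)$ be an abstract group isomorphism; the goal is to produce an isomorphism $M \cong N$ (and, for the strong form of faithfulness, to check that it induces $\Theta$). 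Using the proposition identifying $\aut(\mathrm{Dec}(X,S,(T,L)))$ with $W(X,S,(T,L))$, I would work inside $W := W(X,S,(T,L))$ and $W' := W(X',S',(T',L'))$.

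First I would recover $X$. By Theorem \ref{thm:reconX} the triple $(\mathrm{RepPointDec}, \mathrm{EquivRepPointDec}, \mathrm{BDec})$ is a first order interpretation of $(X,B)$ inside $W$, and the same formulas interpret $(X',B')$ inside $W'$. Since a first order interpretation is preserved by any isomorphism of the ambient group, $\Theta$ carries the interpreted copy of $(X,B)$ onto the interpreted copy of $(X',B')$, so these betweenness structures are isomorphic. The faithfulness of $K_{Cone}$ established in Part II then yields $X \cong X'$.

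Next I would recover $S$ and $(T,L)$. Using parameters $f,g$ ranging over the interpreted copy of $X$, Theorem \ref{thm:reconSTL} supplies second order formulas $\mathrm{Above}(A,f)$ and $\mathrm{Between}(A,f,g)$ whose solutions are subgroups isomorphic to $\aut(S)$ and to $\aut(T,L)$ respectively. Because these formulas are second order in the group language with parameters drawn from the \emph{definable} interpreted $X$, and because $\Theta$ sends the interpreted $X$ of $W$ to that of $W'$, $\Theta$ maps a subgroup satisfying $\mathrm{Above}$ to one satisfying $\mathrm{Above}$ in $W'$, and likewise for $\mathrm{Between}$. Hence $\aut(S) \cong \aut(S')$ and $\aut(T,L) \cong \aut(T',L')$ as abstract groups, and the faithfulness of $K_{Rub}$ from Part I upgrades these to $S \cong S'$ and $(T,L) \cong (T',L')$.

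With $X \cong X'$, $S \cong S'$ and $(T,L) \cong (T',L')$ in hand, the isomorphism $M \cong N$ follows by transporting each component isomorphism through the construction of $\mathrm{Dec}$, and no further gluing data needs to be matched. Two points still require attention. The degenerate members of $K_{Dec}$---those with $X \in \lbrace \emptyset, Z \rbrace$ or $S = (T,L) = \emptyset$---collapse to members of $K_{Cone}$ or $K_{Rub}$, already known to be faithful, so I would treat them by checking that the interpretation of $X$ correctly returns the empty structure or a single point and that the subgroup definitions return trivial groups there, so that a genuine decoration cannot be confused with a degenerate one. The step I expect to be the main obstacle is making the recovery of $\aut(S)$ and $\aut(T,L)$ genuinely invariant: one must verify that every abstract isomorphism $\Theta$ respects the second order definitions $\mathrm{Above}$ and $\mathrm{Between}$, which rests on the fact that these refer only to the definable interpreted $X$ and to no choice of coordinatisation of $W$. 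Once this invariance is secured, the reassembly is automatic precisely because $\mathrm{Dec}$ carries no additional structure.
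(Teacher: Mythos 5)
Your proposal is correct and follows essentially the same route as the paper: recover $(X,B)$ via the first order interpretation of Theorem \ref{thm:reconX} together with the Part II result for $K_{Cone}$, then recover $S$ and $(T,L)$ via the second order formulas of Theorem \ref{thm:reconSTL}, and reassemble since $\mathrm{Dec}$ needs no further gluing data. Your version is somewhat more explicit than the paper's (notably in invoking faithfulness of $K_{Rub}$ to pass from $\aut(S)\cong\aut(S')$ to $S\cong S'$, and in treating the degenerate cases), but the underlying argument is the same.
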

\begin{proof}
Let $ \mathrm{Dec}(X_0,S_0,(T_0,L_0)), \mathrm{Dec}(X_1,S_1,(T_1,L_1)) \in K_{Dec}$ and assume that
$$\aut(\mathrm{Dec}(X_0,S_0,(T_0,L_0))) \cong \aut(\mathrm{Dec}(X_1,S_1,(T_1,L_1))) $$
Theorem \ref{thm:reconX} shows that $(X_0,B) \cong (X_1,B)$.  For all $M \in K_{Cone}$
$$M^* \in K_{Cone} \Rightarrow M \cong M^*$$
Therefore $X_0 \cong X_1$.

Theorem \ref{thm:reconSTL} shows that $S_0 \cong S_1$ and $(T_0,L_0) \cong (T_1,L_1)$.
\end{proof}

\begin{theorem}
Let $M$ be a CFPO, let $A$ be a 1-orbit such that $\aut(M)$ acts cone transitively on $A$, and for any $B \subset M$ let $ \sim_B$ be the equivalence relation $x \sim y \Leftrightarrow \path{x,y} \cap B = \emptyset$.  We let $C \in (M \setminus A) / \sim_A$, and describe two conditions.
\begin{enumerate}
\item If $\path{C,M \setminus C} \not= \emptyset$ then there is an $a_c \in A$ such that $$\path{C,M \setminus C} = \lbrace a_C \rbrace$$

This says that if there is only one way to go from $C$ to $M \setminus C$ then $C$ is attached to $a_c$.
\item If $\path{C,M \setminus C} = \emptyset$ then:
\begin{enumerate}
\item $(M \setminus C) / \sim_C$ has exactly two elements which we call $B_C$ and $B'_C$; and
\item there is $(a_C, a'_C) \in A_{ap}$ such that
$$
\begin{array}{c c c}
\path{C,B_C} = \lbrace a_C \rbrace & \textnormal{ and } & \path{C,B'_C} = \lbrace a'_C \rbrace
\end{array}
$$
\end{enumerate}
This says that if there is more than one way to go from $C$ to $M \setminus C$ then $C$ lies between an adjacent pair of $A$.
\end{enumerate}
If every $C \in (M \setminus A) / \sim_A$ satisfy both 1. and 2. then there is an $N \in K_{Dec}$ such that $\aut(M) \cong_A \aut(N)$.
\end{theorem}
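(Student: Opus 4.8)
The plan is to treat this statement as a repackaging of Theorem \ref{thm:KDecwidth}: the hypotheses on $M$, on the $1$-orbit $A$, and on the classes $C \in (M \setminus A)/\sim_A$ are word-for-word the same, so the construction carried out there already supplies almost everything. First I would invoke Theorem \ref{thm:KDecwidth} directly to obtain a cone transitive CFPO $X$ (the substructure of $M$ carried by $A$), trees $S$ and $(T,L)$ built from the $\sim_A$-classes, and an isomorphism $\aut(M) \cong \aut(\mathrm{Dec}(X,S,(T,L)))$. I would then set $N := \mathrm{Dec}(X,S,(T,L))$ as the candidate witness and check the two things that the present statement asks for beyond Theorem \ref{thm:KDecwidth}, namely that $N$ lies in $K_{Dec}$ and that the isomorphism can be taken to respect $A$.

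The second step is the membership $N \in K_{Dec}$, which is where the degenerate arguments $\emptyset$ and $Z$ matter. Because $A$ is a nonempty $1$-orbit on which $\aut(M)$ acts cone transitively, and $\aut(M)|_A \leq \aut(X)$, the group $\aut(X)$ also acts cone transitively on $X$; hence $X \in K_{Cone}$, except in the degenerate case $|A| = 1$, where $X = Z$. The decorating structures $S$ and $(T,L)$ are produced as genuine trees in the proof of Theorem \ref{thm:KDecwidth} (their treelikeness coming from Theorem 5.13 of Part I), so they belong to $K_{Rub}$; when no class sits above a given point we take $S = \emptyset$, and when $X$ is dense (so $X_{ap} = \emptyset$ and $\mathcal{C}_T = \emptyset$) the value of $(T,L)$ is irrelevant and may be taken to be $\emptyset$. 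In each case $X \in K_{Cone} \cup \{\emptyset, Z\}$ and $S, (T,L) \in K_{Rub} \cup \{\emptyset\}$, so $N \in K_{Dec}$.

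The third step is to upgrade the abstract isomorphism $\aut(M) \cong \aut(N)$ to the $A$-respecting $\aut(M) \cong_A \aut(N)$. Here I would lean on the fact that the proof of Theorem \ref{thm:KDecwidth} does not merely produce an abstract group isomorphism: it identifies $A$ with $X$ and shows that a map is an automorphism of $M$ exactly when it is an automorphism of $\mathrm{Dec}(X,S,(T,L))$, with $A = X$ preserved on both sides. Reading off this identification, the isomorphism carries the copy of $A$ inside $\aut(M)$ to the copy of $X$ inside $\aut(N)$, which is precisely what $\cong_A$ records.

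The main obstacle I anticipate is not the central isomorphism, which Theorem \ref{thm:KDecwidth} hands us, but the bookkeeping around the two refinements. Concretely, I would have to verify that each of $X$, $S$ and $(T,L)$ is forced into one of the admissible slots $K_{Cone} \cup \{\emptyset, Z\}$ or $K_{Rub} \cup \{\emptyset\}$ even when the relevant family of $\sim_A$-classes is empty or $A$ collapses to a point, and --- the subtler point --- that the identification used in Theorem \ref{thm:KDecwidth} genuinely respects $A$, so that $\cong_A$ rather than merely $\cong$ is available.
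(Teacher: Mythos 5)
Your proposal is correct and matches the paper's approach: the paper states this theorem with no proof at all, treating it precisely as a repackaging of Theorem \ref{thm:KDecwidth} (whose hypotheses are word-for-word identical) through the definition of $K_{Dec}$, which is exactly the route you take. The bookkeeping you supply --- admitting the degenerate values $\emptyset$ and $Z$ for the components, and observing that the proof of Theorem \ref{thm:KDecwidth} identifies the underlying sets of $M$ and $\mathrm{Dec}(X,S,(T,L))$ so that the isomorphism is a permutation-group isomorphism ($\cong_A$) rather than merely an abstract one --- is exactly what the paper leaves implicit.
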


\section{Unresolved Questions}

The least acceptable assumption made in these papers is in Paper II, that both $ro \uparrow (M)$ and $ro \downarrow (M)$ are at least than 5.  It is a somewhat artificial condition, can it be eliminated?

\begin{question}
Is there an interpretation that works for cone transitive CFPOs where at least one of $ro \uparrow (M)$ and $ro \downarrow (M)$ is less than 5?
\end{question}

The second transitivity condition of Paper II is both strong and unnatural; simply assuming 1-transitivity is much weaker.  In her Ph.D. thesis, Chicot gives a classification of the countable 1-transitive trees \cite{Chicot2004}.  It is an impressive result; there are $2^{\aleph_0}$ many, and they are extremely wild.  They may even have multiple non-isomorphic maximal branches, which are not even 1-transitive!

The maximal branches do have to be `lower isomorphic', i.e. any two principal initial sections of any two maximal branches of a 1-transitive tree must be isomorphic.  This suggests to me that the maximal chains of some 1-transitive CFPOs may be only `interval isomorphic', meaning that any two intervals of the maximal chains are isomorphic.

It would be a wonderful thing to reconstruct the 1-transitive CFPOs.  The frustrating thing is that this second condition is so necessary to the method that I don't believe there is a way to eliminate it.  How can one use the subgroups isomorphic to $A_5$ without assuming that there are any?

Nonetheless, this presents a project:

\begin{question}
Classify the (countable) 1-transitive CFPOs.
\end{question}

Perhaps a method for reconstruction would present itself if they were better understood, but the classification of the 1-transitive trees was an impressive feat, so a classification of the 1-transitive CFPOs would be difficult. A more modest objective would be to find examples that reject our methods entirely.

\begin{question}
Give an example of a 1-transitive CFPO where $\aut(M)$ is unable to act as $A_5$ on the cones of a point, but $ro \uparrow (M)$ and $ro \downarrow (M)$ are greater than or equal to 5.
\end{question}

Even if we had a reconstruction of the class of 1-transitive CFPOs, we would not be able to use decoration to reconstruct the whole class of CFPOs.

\begin{example}
$W(Alt,\mathbb{Z},\emptyset)$ is not the automorphism group of a tree, nor a 1-transitive CFPO, nor the automorphism group of the decoration of a 1-transitive CFPO by  trees.
\end{example}
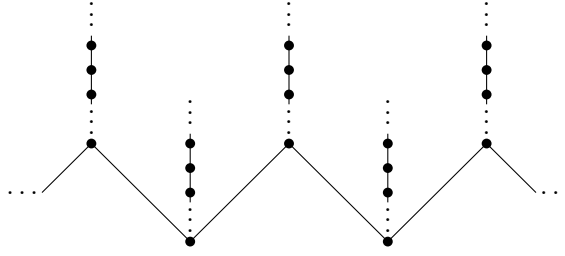
\begin{figure}
\begin{center}
\begin{tikzpicture}[scale=0.13]
\draw (-25,0) -- (-20,5) -- (-10,-5) -- (0,5) -- (10,-5) -- (20,5) -- (25,0);

\fill (-20,5) circle (0.5);
\fill (-10,-5) circle (0.5);
\fill (0,5) circle (0.5);
\fill (20,5) circle (0.5);
\fill (10,-5) circle (0.5);

\fill(-20,10) circle (0.5);
\fill(-20,12.5) circle (0.5);
\fill(-20,15) circle (0.5);
\draw (-20,9) -- (-20,16);
\fill(0,10) circle (0.5);
\fill(0,12.5) circle (0.5);
\fill(0,15) circle (0.5);
\draw (0,9) -- (0,16);
\fill(20,10) circle (0.5);
\fill(20,12.5) circle (0.5);
\fill(20,15) circle (0.5);
\draw (20,9) -- (20,16);

\draw[anchor=north] (-20,11) node {$\vdots$};
\draw[anchor=south] (-20,16) node {$\vdots$};
\draw[anchor=north] (0,11) node {$\vdots$};
\draw[anchor=south] (0,16) node {$\vdots$};
\draw[anchor=north] (20,11) node {$\vdots$};
\draw[anchor=south] (20,16) node {$\vdots$};

\fill(-10,0) circle (0.5);
\fill(-10,2.5) circle (0.5);
\fill(-10,5) circle (0.5);
\draw (-10,-1) -- (-10,6);
\fill(10,0) circle (0.5);
\fill(10,2.5) circle (0.5);
\fill(10,5) circle (0.5);
\draw (10,-1) -- (10,6);

\draw[anchor=north] (-10,1) node {$\vdots$};
\draw[anchor=south] (-10,6) node {$\vdots$};
\draw[anchor=north] (10,1) node {$\vdots$};
\draw[anchor=south] (10,6) node {$\vdots$};

\draw (-27,0) node {$\ldots$};
\draw (27,0) node {$\ldots$};

\end{tikzpicture}
\caption{$Dec(Alt,\mathbb{Z},\emptyset)$}
\end{center}
\end{figure}

Which informs the next question:

\begin{question}
Is there a minimal class of CFPOs such that every automorphism group of a CFPO occurs as the automorphism group of a decoration of a member of the class by trees?
\end{question}

In \cite{Parigot1982}, as well as showing that all completions of the theory of trees are NIP, Parigot shows that the theory of a tree is stable if and only if every maximal branch has at most $n$ elements for some $n \in \mathbb{N}$.

While I am almost certain that this is also for the CFPOs, there is perhaps scope for defining an infinite order even when the maximal branches are finite, for example in Alt the pairs $(a_0, a_{2n})$ have a natural order.  While I would be shocked if this order is definable, I cannot see a way to prove that it is undefinable in all CFPOs.

\begin{question}
Is a CFPO stable if and only if all its maximal branches have at most $n$ elements for some fixed $n \in \mathbb{N}$.
\end{question}

\bibliography{bib}{}
\bibliographystyle{plain}

\end{document}